\theoremstyle{plain}
\newtheorem{theorem}{Theorem}
\newtheorem{lemma}[theorem]{Lemma}
\newtheorem{corollary}[theorem]{Corollary}
\newtheorem{proposition}[theorem]{Proposition}
\theoremstyle{definition}
\newtheorem{definition}{Definition}
\newtheorem{claim}{Claim}[theorem]
\newtheorem{question}{Question}
\newcommand{\N}{\mathbb N}
\newcommand{\Can}{2^{\N}}
\newcommand{\card}{{\rm{card}}}
\newcommand{\fhi}{\varphi}
\newcommand{\quot}[2]{{\raisebox{.2em}{$#1\!$}\left/\raisebox{-.2em}{$#2$}\right.}}
\newcommand{\harpoone}[1]{%
\begin{tikzpicture}[#1]%
\draw (0,-0.5pt) -- (0,5pt);%
\draw (0,5pt) -- (1.5pt,3.5pt);%
\end{tikzpicture}%
}
\newcommand{\restr}[1]{\, _{\harpoone{}\, #1}}
\begin{document}

\title[Quotients of projective Fra\"iss\'e limits]{Arcs, hypercubes, and graphs as quotients of projective Fra\"iss\'e limits}
\author[Gianluca Basso \and Riccardo Camerlo]{Gianluca Basso* \and Riccardo Camerlo** }

\newcommand{\acr}{\newline\indent}

\address{\llap{*\,}D\'epartement des syst\`emes d'information\acr
                   Universit\'e de Lausanne\acr
                   Quartier UNIL-Dorigny B\^atiment Internef\acr
                   1015 Lausanne\acr
                   SWITZERLAND}
\email{gianluca.basso@unil.ch}

\address{\llap{**\,}Dipartimento di scienze matematiche \guillemotleft{Joseph-Louis Lagrange}\guillemotright\acr
                    Politecnico di Torino\acr
                    Corso Duca degli Abruzzi, 24\acr
                    10129 Torino\acr
                    ITALY}
\email{riccardo.camerlo@polito.it}

\subjclass[2010]{Primary 03E75; Secondary 54F15}
\keywords{Topological structures, projective Fra\"iss\'e limits, hypercubes, graphs}

\thanks{The research presented in this paper has been done while the second author was visiting the Department of information systems of the University of Lausanne. He wishes to thank the \'Equipe de logique, and in particular its director prof. Jacques Duparc, for providing such a friendly environment for research.}

\begin{abstract}
We establish some basic properties of quotients of projective Fra\"iss\'e limits and exhibit some classes of compact metric spaces that are the quotient of a projective Fra\"iss\'e limit of a projective Fra\"iss\'e family in a finite language.
We prove the result for the arcs directly, and by applying some closure properties we obtain all hypercubes and graphs as well.
\end{abstract}

\maketitle

\section{Introduction}
Projective Fra\"iss\'e families of topological structures and there limits --- called projective Fra\"iss\'e limits -- for a given language $ \mathcal L $ have been introduced by T. Irwin and S. Solecki in \cite{Irwin2006}.
In that paper, the authors focused on a particular example, where $ \mathcal L =\{ R\}$ contained a unique binary relation symbol and the limit $ \mathbb P =( \mathbb P ,R^{ \mathbb P })$ turned out to be endowed with an equivalence relation $R^{ \mathbb P }$, with the quotient $ \quot{ \mathbb P }{R^{ \mathbb P }} $ being a pseudo-arc.
The characterisation of all spaces that can be obtained, up to homeomorphism, as quotients $ \quot{ \mathbb P }{R^{ \mathbb P }} $, where $( \mathbb P ,R^{ \mathbb P})$ is the projective Fra\"iss\'e limit of a projective Fra\"iss\'e family of finite topological $ \mathcal L $-structures, for $ \mathcal L $ as above and $R^{ \mathbb P }$ an equivalence relation, has been settled in \cite{Camerl2010} and consists of the following list:
\begin{itemize}
\item Cantor space;
\item disjoint sums of $m$ singletons and $n$ pseudo-arcs, with $m+n>0$;
\item disjoint sums of $n$ spaces each of the form $X=P\cup\bigcup_{j\in \N }Q_j$, where $P$ is a pseudo-arc, each $Q_j$ is a Cantor space which is clopen in $X$ and $\bigcup_{j\in \N }Q_j$ is dense in $X$.
\end{itemize}
The purpose of this note is to begin a systematic study of which spaces can be obtained as quotients of projective Fra\"iss\'e limits of projective Fra\"ss\'e families for more general languages.
In section \ref{defandnot} we recall some basic definitions and fix some terminology and notations to be used in the rest of the paper.
In section \ref{facts} we show some simple facts to be used later and note that, if we admit infinite languages, then every compact metric space can be obtained as a quotient of a projective Fra\"iss\'e limit.
We therefore restrict our attention to finite languages and prove, in section \ref{sum-prod}, that the class of spaces that can be obtained as quotients of projective Fra\"iss\'e limits is closed under finite disjoint unions, finite products, and particular quotients satisfying some extra technical conditions.
Section \ref{graphs} presents some examples: after showing that arcs can be obtained as quotients of projective Fra\"iss\'e limits, the results of section \ref{sum-prod} allow to extend this property to hypercubes and graphs.
Finally, in section \ref{qs} we discuss some questions that appear naturally and that could lead to further research in the subject.

We note here that the construction of the projective Fra\"iss\'e limit has already been successfully employed in the literature, leading to interesting results on compact metric spaces and groups of homeomorphisms (see for example \cite{Kwiatk2012}, \cite{Kwiatk2014}, \cite{Bartos2015}).

\section{Basic terminology and definitions} \label{defandnot}
We recall here some basic definitions, mainly from \cite{Irwin2006}, \cite{Camerl2010}.

Let a first order language $ \mathcal L $ be given.
A {\it topological} $ \mathcal L $-{\it structure} is a zero-dimensional, (Hausdorff) compact, second countable space that is also an $ \mathcal L $-structure such that:
\begin{itemize}
\item the interpretations of the relation symbols are closed sets;
\item the interpretations of the function symbols are continuous functions.
\end{itemize}
An {\it epimorphism} between topological $ \mathcal L $-structures $A,B$ is a continuous surjection $ \fhi :A\to B$ such that:
\begin{itemize}
\item $r^B=\underset{n \text{ times}}{\underbrace{ \fhi \times \ldots \times \fhi}}\,(r^A)$ for every $n$-ary relation symbol $r$;
\item $f^B( \fhi (a_1),\ldots , \fhi (a_n))= \fhi f^A(a_1,\ldots ,a_n)$ for every $n$-ary function symbol $f$ and $a_1,\ldots ,a_n\in A$;
\item $ \fhi (c^A)=c^B$ for every constant symbol $c$.
\end{itemize}
An {\it isomorphism} is a bijective epimorphism, so in particular it is an homeomorphism between the supports.
An epimorphism $ \fhi :A\to B$ {\it refines} a covering $ \mathcal U $ of $A$ if the preimage of any element of $B$ is included in some element of $ \mathcal U $.

A family $ \mathcal F $ of topological $ \mathcal L $-structures is a {\it projective Fra\"iss\'e family} if the following properties hold:
\begin{itemize}
\item[(JPP):] (joint projection property) for every $D,E\in \mathcal F $ there are $F\in \mathcal F $ and epimorphisms $F\to D$, $F\to E$;
\item[(AP):] (amalgamation property) for every $C,D,E\in \mathcal F $ and epimorphisms $ \fhi_1:D\to C$, $ \fhi_2:E\to C$ there are $F\in \mathcal F $ and epimorphisms $\psi_1:F\to D$, $\psi_2:F\to E$ such that $ \fhi_1\psi_1= \fhi_2\psi_2$.
\end{itemize}
Given a family $ \mathcal F $ of topological $ \mathcal L $-structures, a topological $ \mathcal L $-structure $ \mathbb F $ is a {\it projective Fra\"iss\'e limit} of $ \mathcal F $ if the following hold:
\begin{itemize}
\item[(L1):] (projective universality) for every $D\in \mathcal F $ there is some epimorphism $ \mathbb F \to D$;
\item[(L2):] for every finite discrete topological space $A$ and continuous function $f: \mathbb F \to A$, there are $D\in \mathcal F $, an epimorphism $ \fhi : \mathbb F \to D$ and a function $f':D\to A$ such that $f=f' \fhi $;
\item[(L3):] (projective ultrahomogeneity) for every $D\in \mathcal F $ and epimorphisms $ \fhi_1, \fhi_2: \mathbb F \to D$ there exists an isomorphism $\psi : \mathbb F \to \mathbb F $ such that $ \fhi_2= \fhi_1\psi $.
\end{itemize}
Property (L2) is equivalent to
\begin{itemize}
\item[(L2$'$):] for any clopen covering $ \mathcal U $ of $ \mathbb F $ there are $D\in \mathcal F $ and an epimorphism $ \mathbb F \to D$ refining $ \mathcal U $.
\end{itemize}
In \cite{Irwin2006} it is proved that every non-empty, at most countable, projective Fra\"iss\'e family of finite topological $ \mathcal L $-structures has a projective Fra\"iss\'e limit, which is unique up to isomorphism.

If $ \mathcal F $ is a class of topological $ \mathcal L $-structures, a {\it fundamental sequence} $(D_n,\pi_n)$ is a sequence of elements of $ \mathcal F $ together with epimorphisms $\pi_n:D_{n+1}\to D_n$ such that, denoting $\pi_n^m=\pi_n\cdots\pi_{m-1}:D_m\to D_n$ for $n<m$ and letting $\pi_n^n:D_n\to D_n$ be the identity, the following properties hold:
\begin{itemize}
\item for every $D\in \mathcal F $ there are $n$ and an epimorphism $D_n\to D$;
\item for any $n$, any $E,F\in \mathcal F $ and any epimorphisms $ \fhi_1 :F\to E$, $ \fhi_2:D_n\to E$, there exist $m\geq n$ and an epimorphism $\psi :D_m\to F$ such that $ \fhi_1\psi= \fhi_2\pi_n^m$.
\end{itemize}

To study projective Fra\"iss\'e limits it is enough to consider fundamental sequences, due to the following fact whose details can be found in \cite{Camerl2010}.

\begin{proposition}
Let $ \mathcal F $ be a non-empty, at most countable projective Fra\"ss\'e family of finite topological $ \mathcal L $-structures.
Then the following are equivalent.
\begin{enumerate}
\item $ \mathcal F $ is a projective Fra\"iss\'e family;
\item $ \mathcal F $ has a projective Fra\"iss\'e limit;
\item $ \mathcal F $ has a fundamental sequence.
\end{enumerate}
Moreover, in this case the projective Fra\"iss\'e limits of $ \mathcal F $ and of its fundamental sequence coincide.
A limit for both is the inverse limit of the fundamental sequence.
\end{proposition}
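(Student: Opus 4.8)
The plan is to prove the cycle of implications $(1)\Rightarrow(3)\Rightarrow(2)\Rightarrow(1)$ and then to read off the final assertions from this cycle together with the uniqueness statement of Irwin--Solecki recalled above; the bulk of the work sits in $(3)\Rightarrow(2)$. For $(1)\Rightarrow(3)$ I would build a fundamental sequence by a recursion with dovetailing. Fix an enumeration $(E_k)_{k\in\N}$ of $\mathcal F$ in which each member occurs infinitely often, and plan to enumerate as well --- as the structures $D_n$ get chosen --- all ``amalgamation requests'' $(n,E,F,g,h)$ with $E,F\in\mathcal F$ and $g\colon F\to E$, $h\colon D_n\to E$ epimorphisms; there are only countably many of these, since each $D_n$ is finite and $\mathcal F$ is countable. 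At a typical stage, having built $D_1,\dots,D_j$ with the bonding epimorphisms, I would either apply (JPP) to $D_j$ and the next $E_k$, obtaining $D_{j+1}\in\mathcal F$ with epimorphisms $\pi_j\colon D_{j+1}\to D_j$ and $D_{j+1}\to E_k$ (this, done cofinally, secures the first clause of the definition of a fundamental sequence), or else take the next pending request $(n,E,F,g,h)$ with $n\le j$ and apply (AP) to the epimorphisms $g\colon F\to E$ and $h\pi_n^{j}\colon D_j\to E$, obtaining $D_{j+1}\in\mathcal F$ together with epimorphisms $q\colon D_{j+1}\to F$ and $\pi_j\colon D_{j+1}\to D_j$ with $gq=h\pi_n^{j}\pi_j=h\pi_n^{j+1}$, so that $m=j+1$ and this $q$ witness the second clause for the request. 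Interleaving the two procedures yields the fundamental sequence; this step is essentially bookkeeping.

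For $(3)\Rightarrow(2)$, given a fundamental sequence $(D_n,\pi_n)$ I would form the inverse limit $\mathbb F=\varprojlim(D_n,\pi_n)$ with its canonical projections $\rho_n\colon\mathbb F\to D_n$ and equip $\mathbb F$ with the inverse-limit $\mathcal L$-structure: each $n$-ary relation symbol $r$ is interpreted as $\bigcap_n(\rho_n\times\cdots\times\rho_n)^{-1}(r^{D_n})$, and the functions and constants are computed coordinatewise. Then $\mathbb F$ is zero-dimensional, compact and second countable, hence a topological $\mathcal L$-structure, and a compactness/finite-intersection argument --- using that the $\pi_n$ are epimorphisms, so that the relevant finite approximating sets are non-empty --- shows that each $\rho_n$ is an epimorphism. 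Property (L1) then follows by composing a suitable $\rho_n$ with an epimorphism $D_n\to D$ provided by the first clause of the definition of a fundamental sequence, and (L2) follows from the standard fact that a continuous map from $\mathbb F$ into a finite discrete space factors through some $\rho_n$ (its finitely many fibres form a clopen partition, refined by the fibres of $\rho_n$ once $n$ is large). The hard part is (L3): given epimorphisms $\varphi_1,\varphi_2\colon\mathbb F\to D$, I would use (L2) to approximate each $\varphi_i$ by an epimorphism out of a finite stage and then run a back-and-forth, alternately invoking the second clause of the definition of a fundamental sequence on the two sides, so as to build a commuting ladder between two cofinal subsequences of $(D_n)$; the inverse limit of that ladder is an isomorphism $\psi\colon\mathbb F\to\mathbb F$ with $\varphi_2=\varphi_1\psi$. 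Arranging the ladder so that the back-and-forth closes up --- i.e.\ converges to a genuine automorphism of $\mathbb F$ intertwining the two epimorphisms --- is the main obstacle in the whole argument.

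For $(2)\Rightarrow(1)$, let $\mathbb F$ be a projective Fra\"iss\'e limit of $\mathcal F$. To check (JPP) for given $D,E\in\mathcal F$, I would take epimorphisms $\alpha\colon\mathbb F\to D$ and $\beta\colon\mathbb F\to E$ from (L1), use (L2$'$) to find $F\in\mathcal F$ and an epimorphism $\chi\colon\mathbb F\to F$ refining the clopen covering of $\mathbb F$ by the sets $\alpha^{-1}(d)\cap\beta^{-1}(e)$ (with $d\in D$, $e\in E$), note that $\alpha$ and $\beta$ are then constant on each fibre of $\chi$ and hence factor as $\alpha=\alpha'\chi$, $\beta=\beta'\chi$ for (automatically continuous) maps $\alpha'\colon F\to D$ and $\beta'\colon F\to E$, and finally observe, by a short diagram chase using that $\chi$ is a surjective epimorphism, that $\alpha'$ and $\beta'$ are themselves epimorphisms. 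For (AP), given $\varphi_1\colon D\to C$ and $\varphi_2\colon E\to C$, I would choose epimorphisms $\alpha\colon\mathbb F\to D$ and $\beta\colon\mathbb F\to E$ by (L1), note that $\varphi_1\alpha$ and $\varphi_2\beta$ are epimorphisms onto $C$, apply (L3) to get an isomorphism $\psi$ of $\mathbb F$ with $\varphi_2\beta=\varphi_1\alpha\psi$, replace $\beta$ by $\beta\psi$ so that $\varphi_1\alpha=\varphi_2\beta$, and then factor $\alpha$ and $\beta$ through a common epimorphism $\chi\colon\mathbb F\to F$ ($F\in\mathcal F$) exactly as in the (JPP) argument; surjectivity of $\chi$ then gives epimorphisms $\alpha'\colon F\to D$ and $\beta'\colon F\to E$ with $\varphi_1\alpha'=\varphi_2\beta'$, which is (AP).

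This closes the cycle. For the ``moreover'' clause: a projective Fra\"iss\'e family has, by the cited result, a unique projective Fra\"iss\'e limit, and the construction in $(3)\Rightarrow(2)$ exhibits the inverse limit $\varprojlim(D_n,\pi_n)$ of any fundamental sequence as such a limit for $\mathcal F$; moreover $(D_n,\pi_n)$ is also a fundamental sequence for the family $\{D_n:n\in\N\}$ (the second clause being inherited verbatim, since each $D_n\in\mathcal F$), so the same construction makes $\varprojlim(D_n,\pi_n)$ the projective Fra\"iss\'e limit of that family as well. Hence all these limits coincide up to isomorphism and are realised by the inverse limit of the fundamental sequence. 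Apart from the back-and-forth needed for (L3), every step above is routine once the inverse-limit structure has been set up with care.
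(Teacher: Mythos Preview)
The paper does not actually prove this proposition: it states it and defers entirely to \cite{Camerl2010} with the sentence ``the following fact whose details can be found in \cite{Camerl2010}''. So there is no proof in the paper to compare against; you have supplied what the paper omits.

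Your sketch is sound and follows the standard route one finds in the literature (essentially that of \cite{Irwin2006} and \cite{Camerl2010}). A couple of small points. First, in your $(2)\Rightarrow(1)$ argument for (AP), after obtaining $\psi$ with $\varphi_2\beta=\varphi_1\alpha\psi$ you should replace $\beta$ by $\beta\psi^{-1}$ (or equivalently $\alpha$ by $\alpha\psi$) to get $\varphi_1\alpha=\varphi_2\beta$; as written, replacing $\beta$ by $\beta\psi$ does not give the desired equality, though of course the intent is clear. Second, when you factor an epimorphism $\varphi\colon\mathbb F\to D$ through some $\rho_n$ via (L2), the resulting map $D_n\to D$ is a priori only a function; you should record the easy check (which you do carry out later in the (JPP) argument) that a factor of an epimorphism through an epimorphism is again an epimorphism, since this is what lets the back-and-forth for (L3) get off the ground. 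Finally, note that the hypothesis of the proposition as printed already calls $\mathcal F$ a ``projective Fra\"iss\'e family'', which makes condition (1) trivially satisfied; this is presumably a slip in the paper, and your reading --- treating (1), (2), (3) as genuinely equivalent conditions on an arbitrary countable family of finite structures --- is the intended one.
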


In the sequel, whenever we denote a language with a subscript, like in $ \mathcal L_R$, we mean that the language contains a distinguished binary relation symbol represented in the subscript (in this case the symbol $R$).
The following definition is central.

\begin{definition}
A compact metric space $X$ is $ \mathcal L_R$-{\it representable} if there exists an at most countable projective Fra\"iss\'e family of finite topological $ \mathcal L_R $-structures such that, denoting $ \mathbb F $ its projective Fra\"iss\'e limit, $R^{ \mathbb F }$ is an equivalence relation and $ \quot{ \mathbb F }{R^{ \mathbb F }} $ is homeomorphic to $X$.

Space $X$ is {\it finitely representable} if it is $ \mathcal L_R$-representable for some finite $ \mathcal L_R$.
\end{definition}

In this terminology, when $ \mathcal L_R=\{ R\} $, the $ \mathcal L_R$-representable spaces have been characterised in \cite{Camerl2010}.

\section{Some preliminary facts} \label{facts}
In this section we collect some basic properties of projective Fra\"iss\'e families and their limits.

\begin{proposition} \label{arflessivsimetric}
Let $ \mathcal L_R=\{ R,\ldots\} $ and suppose $ \mathcal F $ is a projective Fra\"iss\'e family in the language $ \mathcal L_R$.
Let $ \mathbb F =( \mathbb F ,R^{ \mathbb F },\ldots )$ be the projective Fra\"iss\'e limit of $ \mathcal F $.
\begin{enumerate}
\item If $R$ is interpreted by all structures in $ \mathcal F $ as a reflexive relation, then $R^{ \mathbb F }$ is reflexive as well.
\item If $R$ is interpreted by all structures in $ \mathcal F $ as a symmetric relation, then $R^{ \mathbb F }$ is symmetric as well.
\item If $R$ is interpreted by all structures in $ \mathcal F $ as an anti-symmetric relation, then $R^{ \mathbb F }$ is anti-symmetric as well.
\item If $R$ is interpreted by all structures in $ \mathcal F $ as a transitive relation, then $R^{ \mathbb F }$ is transitive as well.
\item If $R$ is interpreted by all structures in $ \mathcal F $ as a total relation, then $R^{ \mathbb F } $ is total as well.
\item If $R$ is interpreted by all structures in $ \mathcal F $ as having a first (respectively, last) element, then $R^{ \mathbb F }$ has a first (respectively, last) element as well.
\item If $R$ is interpreted by all strucutres in $ \mathcal F $ as a connected relation, then for any partition $\{ U,V\} $ of $ \mathbb F $ into clopen sets there are $x\in U,y\in V$ with $xR^{ \mathbb F }y$.
\end{enumerate}
\end{proposition}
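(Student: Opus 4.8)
The plan is to work throughout with the realisation of $\mathbb F$ as an inverse limit. By the correspondence recalled above between a projective Fra\"iss\'e limit and the inverse limit of a fundamental sequence, $\mathcal F$ has a fundamental sequence $(D_n,\pi_n)$ with each $D_n\in\mathcal F$, and $\mathbb F$ is isomorphic to $\varprojlim(D_n,\pi_n^m)$; since an isomorphism of topological $\mathcal L_R$-structures transports $R^{\mathbb F}$, first and last elements, and clopen partitions faithfully, I would simply take $\mathbb F=\varprojlim(D_n,\pi_n^m)$, with projections $p_n\colon\mathbb F\to D_n$. Two features of this presentation do all the work. First, the bonding maps $\pi_n^m\colon D_m\to D_n$ (for $n\le m$) and the projections $p_n$ are epimorphisms, so each of them sends the interpretation of $R$ \emph{onto} the interpretation of $R$ downstairs; in particular $(a,b)\in R^{D_m}$ implies $(\pi_n^m(a),\pi_n^m(b))\in R^{D_n}$, and hence for fixed $x,y\in\mathbb F$ the set $\{\,n:p_n(x)\mathrel{R^{D_n}}p_n(y)\,\}$ is an initial segment of $\N$. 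Second, the $\mathcal L_R$-structure on the inverse limit is computed coordinatewise, so that
\[
x\mathrel{R^{\mathbb F}}y\iff p_n(x)\mathrel{R^{D_n}}p_n(y)\text{ for all }n,\qquad x=y\iff p_n(x)=p_n(y)\text{ for all }n.
\]

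With this dictionary, items (1)--(4) are pointwise translations: reflexivity, symmetry and transitivity of $R^{\mathbb F}$ follow immediately by applying the corresponding property of each $R^{D_n}$ to the projected tuples, while for anti-symmetry, $x\mathrel{R^{\mathbb F}}y$ together with $y\mathrel{R^{\mathbb F}}x$ forces $p_n(x)=p_n(y)$ for every $n$, hence $x=y$. For item (5), given $x,y\in\mathbb F$, totality of the $R^{D_n}$ makes the two initial segments $\{\,n:p_n(x)\mathrel{R^{D_n}}p_n(y)\,\}$ and $\{\,n:p_n(y)\mathrel{R^{D_n}}p_n(x)\,\}$ cover $\N$; since each is an initial segment, one of them equals $\N$, which by the dictionary gives $x\mathrel{R^{\mathbb F}}y$ or $y\mathrel{R^{\mathbb F}}x$.

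For item (6), let $F_n=\{\,a\in D_n:a\mathrel{R^{D_n}}b\text{ for all }b\in D_n\,\}$ be the set of first elements of $D_n$, which is non-empty by hypothesis. Using that $\pi_n^m$ is surjective and maps $R^{D_m}$ into $R^{D_n}$, one checks $\pi_n^m(F_m)\subseteq F_n$, so $(F_n)$ equipped with the restricted bonding maps is an inverse system of non-empty finite sets and therefore has a point $x$; then $p_n(x)\in F_n$ for all $n$, so for each $y\in\mathbb F$ and each $n$ we have $p_n(x)\mathrel{R^{D_n}}p_n(y)$, i.e.\ $x\mathrel{R^{\mathbb F}}y$, and $x$ is a first element of $R^{\mathbb F}$. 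The case of a last element is symmetric.

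For item (7), let $\{U,V\}$ be a partition of $\mathbb F$ into non-empty clopen sets. By (L2$'$) there are $D\in\mathcal F$ and an epimorphism $\varphi\colon\mathbb F\to D$ refining $\{U,V\}$; then $D=D_U\sqcup D_V$, where $D_U=\{d\in D:\varphi^{-1}(d)\subseteq U\}$ and $D_V=\{d\in D:\varphi^{-1}(d)\subseteq V\}$, and $\varphi^{-1}(D_U)=U$, $\varphi^{-1}(D_V)=V$, both pieces non-empty. Connectedness of $R^D$ yields an $R^D$-related pair with one component in $D_U$ and the other in $D_V$; exchanging $U$ and $V$ if necessary, take $a\in D_U$ and $b\in D_V$ with $a\mathrel{R^D}b$, and lift $(a,b)$ through the epimorphism $\varphi$ to some $(x,y)\in R^{\mathbb F}$ with $\varphi(x)=a$ and $\varphi(y)=b$; then $x\in U$ and $y\in V$, as required. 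The one place that needs care is getting the inverse-limit presentation right --- in particular that the bonding maps and the projections are epimorphisms, so that the interpretation of $R$ is preserved along the maps and reflected in the limit precisely as the coordinatewise intersection; after that the only non-mechanical steps are the initial-segment observation behind item (5) (the ``side'' on which $R$ holds cannot keep alternating as one goes down the tower) and the appeal to (L2$'$) in item (7) that pushes an arbitrary clopen partition down to a finite stage.
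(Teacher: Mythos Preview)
Your argument is correct, but it proceeds along a genuinely different route from the paper. The paper works \emph{intrinsically} from the axioms (L1)--(L3) defining a projective Fra\"iss\'e limit: for each of (3)--(6) it fixes a putative counterexample (or a target point), chooses suitable clopen neighbourhoods, invokes (L2$'$) to obtain an epimorphism onto a member of $\mathcal F$ refining that cover, and then uses the hypothesis on $R^A$ together with the closedness of $R^{\mathbb F}$ to conclude; items (1),(2),(7) are handled by citing the analogous arguments in Irwin--Solecki. You instead pass once and for all to the concrete realisation of $\mathbb F$ as the inverse limit of a fundamental sequence and use the coordinatewise description $xR^{\mathbb F}y\iff\forall n\ p_n(x)R^{D_n}p_n(y)$, which reduces (1)--(4) to one-line checks, handles (5) by the neat initial-segment observation, and (6) by a K\"onig-type compactness on the subsystem of first elements; only in (7) do the two approaches coincide.

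Each approach buys something. Yours is shorter and more transparent for (1)--(6), and the initial-segment trick for totality is cleaner than the paper's neighbourhood argument. The paper's approach, on the other hand, never needs to invoke the existence of a fundamental sequence or the explicit inverse-limit description of $R^{\mathbb F}$: it works directly from (L2$'$) and the fact that $R^{\mathbb F}$ is closed, so it applies verbatim to any topological $\mathcal L_R$-structure satisfying (L1)--(L3). One small point worth making explicit in your write-up is that the projections $p_n$ are themselves epimorphisms (so that pairs in $R^{D_n}$ really do lift to $R^{\mathbb F}$); you use this in (7) when you ``lift $(a,b)$ through $\varphi$'', and it is what justifies the coordinatewise formula for $R^{\mathbb F}$ in the first place.
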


\begin{proof}
In this proof we will use property (L2$'$) extensively.

(1) and (2)
The proof is similar to the argument carried out in the proof of \cite[lemma 4.1]{Irwin2006}.

(3) Let $x,y\in \mathbb F $ be distinct elements such that $xR^{ \mathbb F }yR^{ \mathbb F }x$.
Pick a clopen subset $U$ of $ \mathbb F $ such that $x\in U,y\notin U$ and find $A\in \mathcal F $ with an epimorphism $\varphi : \mathbb F \to A$ refining $\{ U, \mathbb F \setminus U\} $.
Then $\varphi (x),\varphi (y)$ are distinct and $\varphi (x)R^A\varphi (y)R^A\varphi (x)$.

(4)
Let $x,y,z\in \mathbb F $, with $xR^{ \mathbb F }yR^{ \mathbb F }z$.
Since $R^{ \mathbb F } $ is closed, it is enough to show that for any neighbourhoods $U$ of $x$ and $V$ of $z$ there are $x'\in U$, $z'\in V$ with $x'R^{ \mathbb F }z'$.
Let $U'\subseteq U,V'\subseteq V$ be clopen neighbourhoods of $x,z$, respectively, with $U'=V'$ if $x=z$ and $U'\cap V'=\emptyset $ otherwise.
Let $A\in \mathcal F$ and $\varphi : \mathbb F \to A$ be an epimorphism refining the clopen covering $\{ U',V', \mathbb F \setminus (U'\cup V')\} $.
Since $\varphi (x)R^A\varphi (z)$, there are $x'\in U'$, $z'\in V'$ with $\varphi (x)=\varphi (x')$, $\varphi (z)=\varphi (z')$, $x'R^{ \mathbb F }z'$.

(5)
It is enough to show that, given $x,y\in \mathbb F $, whenever $U,V$ are clopen neighbourhoods of $x,y$ respectively, there are $x'\in U$, $y'\in V$ such that either $x'R^{ \mathbb F }y'$ or $y'R^{ \mathbb F }x'$.
Moreover, if $x=y$ it can be assumed that $U=V$, while for $x\neq y$ one can take $U\cap V=\emptyset $.
Let $A\in \mathcal F $ with an epimorphism $\varphi : \mathbb F \to A$ refining the clopen covering $\{ U,V, \mathbb F \setminus (U\cup V)\} $.
Since $\varphi (x)R^A\varphi (y)$ or $\varphi (y)R^A\varphi (x)$, there are $x'\in U$, $y'\in V$ such that $\varphi (x')=\varphi (x)$, $\varphi (y')=\varphi (y)$ and either $x'R^{ \mathbb F }y'$ or $y'R^{ \mathbb F }x'$.

(6)
Argue for the first element, the situation for the last being similar.
Fix a compatible complete metric on $ \mathbb F $ and, for each positive integer $n$, let $ \mathcal U_n$ be a partition of $ \mathbb F $ with clopen sets of diameter less than $ \frac 1n $ such that $ \mathcal U_{n+1}$ refines $ \mathcal U_n$.
Let $\varphi_n: \mathbb F \to A_n$ be an epimorphism refining $ \mathcal U_n$ onto some $A_n\in \mathcal F $.
Let $x_n\in \mathbb F $ be such that $\varphi_n(x_n)$ is the first element of $R^{A_n}$ and fix a limit point $x$ of the sequence $x_n$, in order to show that $\forall y\in \mathbb F \ xR^{ \mathbb F }y$.
For this it is enough to prove that given clopen neighbourhoods $U,V$ of $x,y$, respectively, where it can be assumed that $U=V$ if $x=y$ and that $U\cap V=\emptyset $ if $x\neq y$, there are $x'\in U$, $y'\in V$ with $x'R^{ \mathbb F }y'$.
Take $n$ such that if $x\in W\in \mathcal U_n$ and $y\in W'\in \mathcal U_n$, then $W\subseteq U,W'\subseteq V$.
Let $n'\geq n$ be such that $x_{n'}\in W$.
Notice that $\varphi_{n'}$ refines $\{ W,W', \mathbb F \setminus (W\cup W')\} $.
Since $\varphi_{n'}(x_{n'})R^{A_{n'}}\varphi (y)$, there are $x'\in W$, $y'\in W'$ such that $\varphi_{n'}(x')=\varphi_{n'}(x_{n'})$, $\varphi_{n'}(y')=\varphi_{n'}(y)$, $x'R^{ \mathbb F }y'$.

(7)
As for the argument in the proof of \cite[lemma 4.3]{Irwin2006}.
\end{proof}

Notice that for (1),(2),(5),(6),(7) the converse holds as well.

\begin{proposition} \label{machrelassional}
Let $ \mathcal L $ be a language and let $ \mathcal L'$ be obtained from $ \mathcal L $ by replacing each constant symbol $c$ with a unary relation symbol $R_c$ and each function symbol $f$, say with $m$ arguments, with a new relation symbol $R_f$, with $m+1$ arguments.
For every $ \mathcal L $ structure $A$, let $A'$ be the $ \mathcal L'$ structure defined as follows.
\begin{itemize}
\item $A'$ has the same universe as $A$;
\item the interpretations in $A'$ of the relation symbols of $ \mathcal L $ are the same as in $A$;
\item if $c$ is a constant symbol of $ \mathcal L $, then $R_c^{A'}(a)\Leftrightarrow c^A=a$;
\item if $f$ is an $m$-ary function symbol of $ \mathcal L $, then $R_f^{A'}$ is the graph of $f^A$.
\end{itemize}
Then
\begin{enumerate}
\item $ \fhi :A\to B$ is an $ \mathcal L$-epimorphism if and only if $ \fhi :A'\to B'$ is an $ \mathcal L'$-epimorphism.
\item If $ \mathcal F $ is a non-empty, at most countable, projective Fra\"iss\'e family of finite topological $ \mathcal L $-structures and $ \mathcal F'$ is the class obtained from $ \mathcal F $ by replacing each $A\in \mathcal F $ with the $ \mathcal L'$-structure $A'$, then $ \mathcal F'$ is a projective Fra\"iss\'e family.

Moreover, if $ \mathbb F $ is a projective Fra\"iss\'e limit of $ \mathcal F $, then a projective Fra\"iss\'e limit $ \mathbb F'$ of $ \mathcal F'$ can be constructed as follows:
\begin{itemize}
\item the universes of $ \mathbb F , \mathbb F'$ are the same;
\item the interpretations of all relation symbols of $ \mathcal L $ are the same;
\item for every constant symbol $c$ of $ \mathcal L $, $R_c^{ \mathbb F'}(x)\Leftrightarrow c^{ \mathbb F }=x$;
\item for every function symbol $f$ of $ \mathcal L $, $R_f^{ \mathbb F'}$ is the graph of $f^{ \mathbb F}$.
\end{itemize}
\end{enumerate}
\end{proposition}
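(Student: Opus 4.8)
The plan is to reduce the whole statement to part (1), which is a routine symbol-by-symbol check, and then to exploit two observations: the argument for part (1) never appeals to finiteness, and passing from a structure to its primed version alters neither the underlying set nor its topology.

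For part (1), I would first record that $A'$ is genuinely a topological $\mathcal{L}'$-structure: $R_c^{A'}$ equals the singleton $\{c^A\}$, which is closed because $A$ is Hausdorff, and $R_f^{A'}$ is the graph of the continuous map $f^A$, hence closed in $A^{m+1}$; the interpretations of the old relation symbols are closed by hypothesis. Since $A$ and $A'$ (likewise $B$ and $B'$) have the same universe and topology, ``$\varphi$ is a continuous surjection'' says the same thing on both sides, and the clauses in the definition of epimorphism involving the old relation symbols are literally identical; so it suffices to match, for each constant symbol $c$ and each $m$-ary function symbol $f$, the $\mathcal{L}$-clause for $c$ (resp.\ $f$) with the $\mathcal{L}'$-clause for $R_c$ (resp.\ $R_f$). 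For $c$: the clause $R_c^{B'}=\varphi(R_c^{A'})$ unwinds to $\{c^B\}=\{\varphi(c^A)\}$, that is, $\varphi(c^A)=c^B$. For $f$: the clause asserting that $R_f^{B'}$ is the image of $R_f^{A'}$ under $\varphi\times\cdots\times\varphi$ ($m+1$ factors) says that the graph of $f^B$ equals $\{(\varphi(a_1),\dots,\varphi(a_m),\varphi f^A(a_1,\dots,a_m)):a_1,\dots,a_m\in A\}$; reading this equality on a fixed tuple gives $f^B(\varphi(a_1),\dots,\varphi(a_m))=\varphi f^A(a_1,\dots,a_m)$, which is the $\mathcal{L}$-function clause, and conversely, granted that clause, the displayed set rewrites as $\{(\varphi(a_1),\dots,\varphi(a_m),f^B(\varphi(a_1),\dots,\varphi(a_m))):a_1,\dots,a_m\in A\}$, which is exactly the graph of $f^B$ because $\varphi$ is surjective (so $(\varphi(a_1),\dots,\varphi(a_m))$ runs over all of $B^m$). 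This use of surjectivity is the only mildly delicate point of part (1).

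For part (2), the argument just sketched uses no finiteness, so part (1) is valid for arbitrary topological $\mathcal{L}$-structures; in particular it applies to $\mathbb{F}$ and to its primed version, which is precisely the $\mathbb{F}'$ of the statement. Consequently the epimorphisms among the members of $\mathcal{F}'$ coincide with the epimorphisms among the corresponding members of $\mathcal{F}$, and, as priming keeps the finite underlying space, $\mathcal{F}'$ is a non-empty, at most countable family of finite topological $\mathcal{L}'$-structures for which (JPP) and (AP) are immediate restatements of the same properties for $\mathcal{F}$. Hence $\mathcal{F}'$ is a projective Fra\"iss\'e family.

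It remains to verify that $\mathbb{F}'$ — a topological $\mathcal{L}'$-structure by the closedness arguments above — satisfies (L1), (L2), (L3) with respect to $\mathcal{F}'$, i.e.\ is a projective Fra\"iss\'e limit of $\mathcal{F}'$. For (L1): an epimorphism $\mathbb{F}\to D$ supplied by (L1) for $\mathbb{F}$ is, by part (1), an $\mathcal{L}'$-epimorphism $\mathbb{F}'\to D'$. For (L2): a continuous map from $\mathbb{F}'$ to a finite discrete space is the same datum as one from $\mathbb{F}$, so (L2) for $\mathbb{F}$ yields $D\in\mathcal{F}$, an epimorphism $\varphi:\mathbb{F}\to D$ and $f':D\to A$ with $f=f'\varphi$, whence $\varphi:\mathbb{F}'\to D'$ together with the same $f'$ does the job, the universes of $D$ and $D'$ being equal. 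For (L3): given $\mathcal{L}'$-epimorphisms $\varphi_1,\varphi_2:\mathbb{F}'\to D'$, part (1) turns them into $\mathcal{L}$-epimorphisms $\mathbb{F}\to D$, (L3) for $\mathbb{F}$ produces an $\mathcal{L}$-isomorphism $\psi:\mathbb{F}\to\mathbb{F}$ with $\varphi_2=\varphi_1\psi$, and $\psi$, being a bijective $\mathcal{L}'$-epimorphism by part (1), is an $\mathcal{L}'$-isomorphism. The main thing to keep an eye on, besides the surjectivity point in part (1), is that part (1) is genuinely cardinality-free, so that it may legitimately be invoked for $\mathbb{F}$ and $\mathbb{F}'$.
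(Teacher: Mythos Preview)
Your argument is correct. Part (1) is essentially the same verification as in the paper, just organised slightly differently; your observation that surjectivity of $\varphi$ is the point that makes the image of the graph of $f^A$ equal to the \emph{whole} graph of $f^B$ is exactly the substance of the paper's forward computation.

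For part (2) you take a genuinely different route. The paper argues via fundamental sequences: since by (1) the epimorphisms between structures and their primed versions coincide, a fundamental sequence $(D_n,\pi_n)$ for $\mathcal F$ is simultaneously one for $\mathcal F'$, and the Fra\"iss\'e limit is then computed as the inverse limit; the interpretations of $R_c$ and $R_f$ in $\mathbb F'$ are read off coordinatewise from this inverse limit. You instead observe that the proof of (1) is cardinality-free, hence applies to $\mathbb F$ and the structure $\mathbb F'$ described in the statement, and then you verify (L1), (L2), (L3) for $\mathbb F'$ directly by translating each through (1). Your approach is more self-contained, avoiding both the fundamental-sequence machinery and the identification of the Fra\"iss\'e limit with the inverse limit; the paper's approach, on the other hand, makes the computation of $R_c^{\mathbb F'}$ and $R_f^{\mathbb F'}$ explicit via the projections $\pi_n^\infty$, which is useful if one wants to see concretely how the new relations arise from the finite stages.
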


\begin{proof}
\sloppypar
(1) Suppose $ \fhi :A\to B$ is an epimorphism.
Take an $m$-ary function symbol $f$ in $ \mathcal L $.
Then $R_f^{B'}(b_1,\ldots ,b_m,b_{m+1})\Leftrightarrow f^B(b_1,\ldots ,b_m)=b_{m+1}\Leftrightarrow\exists a_1,\ldots ,a_m,a_{m+1}( \fhi (a_1)=b_1\wedge\ldots\wedge \fhi (a_m)=b_m\wedge \fhi (a_{m+1})=b_{m+1}\wedge f^A(a_1,\ldots ,a_m)=a_{m+1})\Leftrightarrow\exists a_1,\ldots ,a_{m+1}( \fhi (a_1)=b_1\wedge\ldots\wedge \fhi (a_{m+1})=b_{m+1}\wedge R_f^{A'}(a_1,\ldots ,a_{m+1}))$.
Similarly, if $c\in \mathcal L $ is a constant symbol, $R_c^{B'}(b)\Leftrightarrow c^B=b\Leftrightarrow b= \fhi (c^A)\Leftrightarrow\exists a(b= \fhi (a)\wedge R_c^{A'}(a))$.
So $ \fhi :A'\to B'$ is an epimorphism.

Conversely, assume $ \fhi :A'\to B'$ is an epimorphism and consider again an $m$-ary function symbol $f\in \mathcal L $.
Then $f^A(a_1,\ldots ,a_m)=a_{m+1}\Leftrightarrow R_f^{A'}(a_1,\ldots ,a_m,a_{m+1})\Rightarrow R_f^{B'}( \fhi (a_1),\ldots , \fhi (a_m), \fhi (a_{m+1}))\Leftrightarrow f^B( \fhi (a_1),\ldots , \fhi (a_m))= \fhi (a_{m+1})$.
Similarly, for a constant symbol $c\in \mathcal L $, one has $a=c^A\Leftrightarrow R_c^{A'}(a)\Rightarrow R_c^{B'}( \fhi (a))\Leftrightarrow \fhi (a)=c^B$.
Thus $ \fhi :A\to B$ is an epimorphism too.

(2) From (1), if $(D_n)$ is a fundamental sequence for $ \mathcal F $, then $(D'_n)$ is a fundamental sequence for $ \mathcal F'$ endowed with the same family of epimorphisms.
Since the projective Fra\"iss\'e limits can be computed as inverse limits, the result about the universe of the limits and the interpretation of the relation symbols of $ \mathcal L $ follows.
Finally, denote by $\pi_n^{\infty }$ the projections of the limits onto the members of the fundamental sequence.
Let $c$ be a constant symbol of $ \mathcal L $; then $R_c^{ \mathbb F'}(x)\Leftrightarrow\forall n\in \N \ R^{D'_n}(\pi_n^{\infty }(x))\Leftrightarrow\forall n\in\N \ c^{D_n}=\pi_n^{\infty }(x)\Leftrightarrow c^{ \mathbb F }=x$.
Similarly, for an $m$-ary function symbol $f$ of $ \mathcal L $, one has $R_f^{ \mathbb F'}(x_1,\ldots ,x_m,x_{m+1})\Leftrightarrow\forall n\in \N \ R_f^{D'_n}(\pi_n^{\infty }(x_1),\ldots ,\pi_n^{\infty }(x_m),\pi_n^{\infty }(x_{m+1}))\Leftrightarrow\forall n\in \N \ f^{D_n}(\pi_n^{\infty }(x_1),\ldots ,\pi_n^{\infty }(x_m))=\pi_n^{\infty }(x_{m+1})\Leftrightarrow f^{ \mathbb F }(x_1,\ldots ,x_m)=x_{m+1}$.
\end{proof}

The import of proposition \ref{machrelassional} is that for our purposes we are always allowed to consider only relational languages.
For later use, note that it follows that the groups of isomorphisms of $ \mathbb F $ and of $ \mathbb F'$ coincide.

Now we show that if one admits infinite languages, then every compact metric space is homeomorphic to the quotient of a projective Fra\"iss\'e limit.
Consequently, in the sequel we will be interested in studying what kind of spaces can be obtained with finite languages.

\begin{lemma} \label{rigidity}
Let $ \mathcal L $ be any language and let $D_0,D_1,\ldots $ be a family of finite topological $ \mathcal L $-structures.
If for every $n\leq m$ there is exactly one epimorphism $\pi_n^m:D_m\to D_n$, then $(D_n,\pi_n^{n+1})$ is a fundamental sequence.
\end{lemma}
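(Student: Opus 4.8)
The plan is to verify the two defining clauses of a fundamental sequence directly from the uniqueness hypothesis. Throughout, $\pi_n^m$ denotes the unique epimorphism $D_m\to D_n$ for $n\le m$, and I first record the coherence $\pi_n^m\pi_m^k=\pi_n^k$ for $n\le m\le k$: both sides are epimorphisms from $D_k$ to $D_n$ (the composition of epimorphisms is again one), so they coincide by uniqueness. In particular $\pi_n^{n+1}$ together with these compositions makes $(D_n)$ an inverse system, and $\pi_n^m=\pi_n^{n+1}\pi_{n+1}^{n+2}\cdots\pi_{m-1}^m$, matching the notation $\pi_n^m=\pi_n\cdots\pi_{m-1}$ from the definition.

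For the first clause, we must produce, for every $D$ in the family $\{D_0,D_1,\dots\}$, some $n$ and an epimorphism $D_n\to D$. But $D=D_k$ for some $k$, and the identity map $D_k\to D_k$ is an epimorphism, so $n=k$ works. (If instead the intended reading is that the family $\mathcal F$ is exactly $\{D_n\}$ with these structures, this is the same statement; the lemma is meant to be applied with $\mathcal F$ the set of the $D_n$.)

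For the second clause, fix $n$, structures $E,F$ in the family, and epimorphisms $\fhi_1\colon F\to E$, $\fhi_2\colon D_n\to E$; we seek $m\ge n$ and an epimorphism $\psi\colon D_m\to F$ with $\fhi_1\psi=\fhi_2\pi_n^m$. Since $E=D_j$ and $F=D_k$ for some $j,k$, pick $m=\max(n,k)$. Then $\pi_k^m\colon D_m\to D_k=F$ is an epimorphism, so set $\psi=\pi_k^m$. It remains to check $\fhi_1\psi=\fhi_2\pi_n^m$: both are epimorphisms $D_m\to E$ (composites of epimorphisms, using that $\fhi_1,\fhi_2$ are epimorphisms and that surjective continuous maps between compact Hausdorff spaces behave well under composition, together with the relational/functional conditions which are preserved under composition), hence equal by the uniqueness hypothesis applied to the pair $(E,D_m)$ — here we use that $E$ is one of the $D_i$, so "exactly one epimorphism $D_m\to E$" is part of the hypothesis.

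The only real subtlety is bookkeeping: one must make sure that every map whose uniqueness is invoked genuinely has domain and codomain among the $D_i$, so that the hypothesis applies; the verifications above are arranged so that this always holds. No amalgamation argument is needed — the rigidity hypothesis trivializes both conditions, since any two epimorphisms with the same (allowed) domain and codomain automatically agree. I expect no genuine obstacle; the proof is a short diagram chase once the coherence identity $\pi_n^m\pi_m^k=\pi_n^k$ is in hand.
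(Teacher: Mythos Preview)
Your overall strategy is correct and close to the paper's, but there is a genuine gap in the verification of the second clause. You set $m=\max(n,k)$ and then claim that the two epimorphisms $\varphi_1\psi,\ \varphi_2\pi_n^m:D_m\to E=D_j$ coincide ``by the uniqueness hypothesis applied to the pair $(E,D_m)$''. But the hypothesis only asserts uniqueness of epimorphisms $D_m\to D_j$ when $j\le m$; nothing in your choice $m=\max(n,k)$ forces $j\le m$. The mere existence of epimorphisms $\varphi_1:D_k\to D_j$ and $\varphi_2:D_n\to D_j$ does not imply $j\le k$ or $j\le n$ (for instance, all the $D_i$ could be isomorphic, in which case epimorphisms go in every direction regardless of indices). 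So the sentence ``\,`exactly one epimorphism $D_m\to E$' is part of the hypothesis'' is unjustified as written.

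The repair is easy: take $m=\max(n,k,j)$ instead. Then $k\le m$ so $\psi=\pi_k^m$ is available, and $j\le m$ so the hypothesis does give uniqueness of epimorphisms $D_m\to D_j$, forcing $\varphi_1\psi=\varphi_2\pi_n^m$. The paper handles the issue differently: it first upgrades the hypothesis to the statement that for \emph{every} pair $n,m$ there is at most one epimorphism $D_m\to D_n$. For $n\le m$ this is given; for $n>m$, if some epimorphism $\varphi:D_m\to D_n$ exists then, since $\pi_m^n:D_n\to D_m$ also exists and both are surjections between finite sets, both must be bijections, and two distinct isomorphisms $D_m\to D_n$ would yield, after composing with $\pi_m^n$, two distinct epimorphisms $D_n\to D_n$, contradicting the hypothesis at $n\le n$. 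With this in hand the paper concludes in one line. Either route works; yours just needs the corrected choice of $m$.
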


\begin{proof}
First notice that from the hypothesis it follows that for any $n,m$ there is at most one epimorphism $D_m\to D_n$.
If $n\leq m$ this is in the hypothesis; on the other hand, if $n>m$, the existence of an epimorphism $ \fhi :D_m\to D_n$ implies that $ \fhi $ and $\pi_m^n$ are actually isomorphisms; if there were two different isomorphisms $D_m\to D_n$, their compositions with $\pi_m^n$ would yield two different isomorphisms $D_n\to D_n$.

Consequently, given any two epimorphisms $ \fhi_1:D_h\to D_k$, $ \fhi_2:D_p\to D_k$ and letting $m=\max (h,p)$, one has $ \fhi_1\pi_h^m= \fhi_2\pi_p^m$.
\end{proof}

\begin{proposition}
Let $ \mathcal L_R=\{ R,\rho_s\}_{s\in 2^{<\omega }}$, where the $\rho_s$ are unary relation symbols for all $s\in 2^{<\omega }$.
Then every compact metric space is $ \mathcal L_R$-representable.
\end{proposition}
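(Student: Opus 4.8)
The plan is to represent an arbitrary non-empty compact metric space $X$ as $\quot{\Can}{\ker f}$ for a continuous surjection $f\colon\Can\to X$, realising $(\Can,\ker f)$ as the limit of a \emph{rigid} fundamental sequence carried by the finite binary trees $2^n$; the unary predicates $\rho_s$ are used only to kill automorphisms. (If $X=\emptyset$, take the family whose only member is the empty structure.) Fix a continuous surjection $f\colon\Can\to X$, and for $s\in 2^{<\omega}$ put $N_s=\{x\in\Can : s\subseteq x\}$, so that $\bigcap_n N_{x\restriction n}=\{x\}$ for every $x\in\Can$. Let $D_n$ be the finite discrete space $2^n$, turned into a topological $\mathcal L_R$-structure by declaring, for $s,t\in 2^n$ and $u\in 2^{<\omega}$,
\[
s\,R^{D_n}\,t\iff f(N_s)\cap f(N_t)\neq\emptyset,\qquad
\rho_u^{D_n}=\{s\in 2^n : s\subseteq u\ \text{or}\ u\subseteq s\},
\]
and let $\pi_n\colon D_{n+1}\to D_n$ be the truncation $s\mapsto s\restriction n$.

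First I would check that each $\pi_n$ is an $\mathcal L_R$-epimorphism: it is surjective; it preserves every $\rho_u$ by an elementary case distinction on $|u|$ versus $n$; and it carries $R^{D_{n+1}}$ onto $R^{D_n}$, since $N_s\subseteq N_{s\restriction n}$ gives $\pi_n(R^{D_{n+1}})\subseteq R^{D_n}$, while witnesses $a\in N_s$, $b\in N_t$ with $f(a)=f(b)$ (for $s,t\in 2^n$) produce the pair $(a\restriction(n+1),b\restriction(n+1))\in R^{D_{n+1}}$ above $(s,t)$. The key point is then that the system is rigid: for $t\in 2^n$ one has $\rho_t^{D_n}=\{t\}$, and $\rho_t^{D_m}=\{s\in 2^m : t\subseteq s\}$ for every $m\ge n$; hence any epimorphism $\psi\colon D_m\to D_n$ must send $\{s\in 2^m : t\subseteq s\}$ onto $\{t\}$ for each $t\in 2^n$, and as these blocks partition $2^m$ this forces $\psi(s)=s\restriction n$, i.e.\ $\psi=\pi_n\circ\cdots\circ\pi_{m-1}$. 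So for $n\le m$ there is exactly one epimorphism $D_m\to D_n$, whence by Lemma~\ref{rigidity} the sequence $(D_n,\pi_n)$ is fundamental; recalling that a non-empty, at most countable family possessing a fundamental sequence is a projective Fra\"iss\'e family whose limit is the inverse limit of that sequence, it follows that $\mathcal F=\{D_n : n\in\N\}$ is a projective Fra\"iss\'e family with limit $\mathbb F=\varprojlim(D_n,\pi_n)$.

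It remains to compute $R^{\mathbb F}$. Identifying $\mathbb F$ with $\Can$ as a space --- it is the inverse limit of the $2^n$ along truncation, and the projection $\pi_n^\infty\colon\mathbb F\to D_n$ is $x\mapsto x\restriction n$ --- we get, exactly as in the proof of Proposition~\ref{machrelassional}, that $x\,R^{\mathbb F}\,y$ iff $f(N_{x\restriction n})\cap f(N_{y\restriction n})\neq\emptyset$ for all $n$. If $f(x)=f(y)$ this holds, as $f(x)$ lies in each of those intersections. If $f(x)\neq f(y)$, then since the $N_{x\restriction n}$ form a decreasing sequence of compacta with intersection $\{x\}$ (and likewise for $y$), continuity of $f$ yields $\bigcap_n f(N_{x\restriction n})=\{f(x)\}$ and $\bigcap_n f(N_{y\restriction n})=\{f(y)\}$; hence the sets $K_n:=f(N_{x\restriction n})\cap f(N_{y\restriction n})$ form a decreasing sequence of compacta with $\bigcap_n K_n=\emptyset$, so some $K_n$ is empty and $x$ is not $R^{\mathbb F}$-related to $y$. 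Therefore $R^{\mathbb F}=\{(x,y) : f(x)=f(y)\}$ is an equivalence relation, and $\quot{\mathbb F}{R^{\mathbb F}}=\quot{\Can}{\ker f}$ is homeomorphic to $X$ via the bijection induced by $f$ (a continuous bijection from a compact space onto a Hausdorff space).

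The combinatorial verifications --- that the $\pi_n$ are epimorphisms and that the system is rigid --- are routine; rigidity is nonetheless the conceptual heart of the argument and is exactly what the predicates $\rho_s$ are for, since it lets us apply Lemma~\ref{rigidity} instead of checking (JPP) and (AP) by hand. I expect the step requiring the most care to be the last one, namely showing that the purely combinatorial relations $R^{D_n}$ assemble in the limit into precisely $\ker f$; this rests on the compactness fact that a decreasing sequence of non-empty compacta has non-empty intersection.
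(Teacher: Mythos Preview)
Your proof is correct and follows essentially the same route as the paper: the paper starts from a closed equivalence $\equiv$ on $\Can$ with $X\simeq\Can/{\equiv}$ rather than a surjection $f$, but since $\ker f$ is precisely such a relation the resulting structures $D_n$, the rigidity argument via the predicates $\rho_s$, and the appeal to Lemma~\ref{rigidity} are identical. The only cosmetic difference is in the final computation of $R^{\mathbb F}$, where the paper uses a sequential argument together with closedness of $\equiv$, while you use the equivalent compactness fact that $\bigcap_n f(N_{x\restriction n})=\{f(x)\}$.
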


\begin{proof}
Let $X$ be a compact metric space and let $\equiv $ be a closed equivalence relation on $ \Can $ such that $X\backsimeq \quot{ \Can }{\equiv }$.
Define $ \mathcal L_R$-structures $D_n=(2^n,R^{D_n},\rho_s^{D_n})_{s\in 2^{<\omega }}$ by letting
\[
\begin{array}{lcl}
uR^{D_n}u' & \Leftrightarrow & \exists x,x'\in \Can\ (u\subseteq x\wedge u'\subseteq x'\wedge x\equiv x') \\
\rho_s^{D_n}(u) & \Leftrightarrow & s\subseteq u\vee u\subseteq s
\end{array}
\]
Now notice that, given $n\leq m$, the only epimorphism $D_m\to D_n$ is the restriction map $\pi_n^m$ defined by $\pi_n^m(w)=w \restr n$.
Indeed, if $w,w'\in 2^m$ are such that $wR^{D_m}w'$, let $x,x'\in \Can $ with $w\subseteq x$, $w'\subseteq x'$, $x\equiv x'$; since $x \restr n=\pi_n^m(w)$, $x' \restr n=\pi_n^m(w')$, it follows that $\pi_n^m(w)R^{D_n}\pi_n^m(w')$.
Moreover, if $w\in 2^m$ satisfies $\rho_s^{D_m}(w)$ for some $s\in 2^{<\omega }$, so that $w$ is compatible with $s$, its restriction $w \restr n$ is compatible with $s$ as well, so $\rho_s^{D_n}(\pi_n^m(w))$ holds.
Conversely, assume first that $u,u'\in 2^n$ fulfill $uR^{D_n}u'$ and let $x,x'\in \Can $ such that $u\subseteq x$, $u'\subseteq x'$, $x\equiv x'$; then $x \restr mR^{D_m}x' \restr m$, $\pi_n^m(x \restr m)=u$, $\pi_n^m(x' \restr m)=u'$.
Finally, suppose that $s\in 2^{<\omega },u\in 2^n$ are such that $\rho_s^{D_n}(u)$; then there is at least an element $w\in 2^m$ such that $\pi_n^m(w)=w \restr n=u$ and $w$ is compatible with $s$, so that $\rho_s^{D_m}(w)$.
To see that $\pi_n^m$ is the unique epimorphism $D_m\to D_n$, notice that for any $w\in 2^m$, the unique element $u\in 2^n$ such that $\rho_w^{D_n}(u)$ is $w \restr n$.

Consequently, by lemma \ref{rigidity}, $(D_n,\pi_n^{n+1})$ is a fundamental sequence.
Let $ \mathbb F =( \Can ,R^{ \mathbb F },\rho_s^{ \mathbb F })_{s\in 2^{<\omega }}$ be its inverse limit.
It is now enough to prove $R^{ \mathbb F }= {\equiv } $, so let $x,x'\in \Can $.
If $x\equiv x'$, then $\forall  n\in \N \ x \restr n R^{D_n}x' \restr n $, so that $xR^{ \mathbb F }x'$.
Conversely, if $xR^{ \mathbb F }x'$, so that $\forall n\in \N \ x \restr n R^{D_n}x' \restr n $, for every $n\in \N $ there are $x_n,x'_n\in \Can $ such that $x \restr n \subseteq x_n$, $x' \restr n\subseteq x'_n$, $x_n\equiv x'_n$, so that $\lim_{n\rightarrow\infty }x_n=x$, $\lim_{n\rightarrow\infty }x'_n=x'$, $x\equiv x'$, since $\equiv $ is closed.
\end{proof}

\section{Closure under topological operations} \label{sum-prod}
This section collects some closure properties of finitely representable spaces.
We will need the following notion.

\begin{definition}
Let $ \mathbb F $ be a projective Fra\"iss\'e limit of a projective Fra\"iss\'e family of topological structures in some language $ \mathcal L_R$ and suppose that $R^{ \mathbb F }$ is an equivalence relation.
A point $x\in \mathbb F $ is {\it almost stable} if, for all isomorphisms $ \fhi $ of $ \mathbb F $, one has that $ \fhi (x)R^{ \mathbb F }x$.
\end{definition}

Notice that the set of almost stable points is invariant under the equivalence relation $R^{ \mathbb F }$.

\begin{theorem} \label{closurefinitesums}
The finite disjoint sum of finitely representable spaces is finitely representable.
\end{theorem}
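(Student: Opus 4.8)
The plan is to take finitely representable spaces $X_1, \dots, X_k$ with witnessing projective Fra\"iss\'e families $\mathcal{F}_1, \dots, \mathcal{F}_k$ in finite languages $\mathcal{L}_{R}^{(1)}, \dots, \mathcal{L}_{R}^{(k)}$ (by Proposition \ref{machrelassional} we may assume each language is purely relational), and to manufacture a single family $\mathcal{F}$ in one finite language whose limit quotient is the disjoint sum $X_1 \sqcup \dots \sqcup X_k$. It suffices to treat $k = 2$ and iterate. The idea is to put the two families ``side by side'': work in a language $\mathcal{L}_R$ containing $R$, a unary predicate $P$ marking which summand a point belongs to, and enough relation symbols to carry both $\mathcal{L}_{R}^{(1)}$ and $\mathcal{L}_{R}^{(2)}$ (relations of the first family interpreted as empty outside $P$, relations of the second interpreted as empty inside $P$). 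The structures of $\mathcal{F}$ are the disjoint sums $A \sqcup B$ with $A \in \mathcal{F}_1$, $B \in \mathcal{F}_2$, with $P$ interpreted as the copy of $A$, with $R^{A \sqcup B} = R^A \cup R^B$ (no $R$-edges between the two pieces), and with the other relations as described.

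First I would check that $\mathcal{F}$ is a projective Fra\"iss\'e family. An epimorphism $A \sqcup B \to A' \sqcup B'$ in $\mathcal{L}_R$ must respect $P$, hence splits as a pair of maps $A \to A'$ and $B \to B'$; using that $R$ has no cross edges and the predicate $P$ is preserved, one verifies these components are $\mathcal{L}_R^{(1)}$- and $\mathcal{L}_R^{(2)}$-epimorphisms respectively, and conversely any such pair assembles to an epimorphism of the sums. Then (JPP) and (AP) for $\mathcal{F}$ follow coordinatewise from (JPP) and (AP) for $\mathcal{F}_1$ and $\mathcal{F}_2$: given a fundamental sequence $(D_n^{(i)})$ for each $\mathcal{F}_i$, the sequence $(D_n^{(1)} \sqcup D_n^{(2)})$ with the product-of-components bonding maps is a fundamental sequence for $\mathcal{F}$ — the two defining properties of a fundamental sequence reduce, via the splitting of epimorphisms, to the corresponding properties of the two given sequences. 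Since projective Fra\"iss\'e limits are computed as inverse limits of fundamental sequences (the Proposition in Section \ref{defandnot}), the limit is $\mathbb{F} = \mathbb{F}_1 \sqcup \mathbb{F}_2$ as a topological space, with $R^{\mathbb{F}} = R^{\mathbb{F}_1} \cup R^{\mathbb{F}_2}$ and $P^{\mathbb{F}}$ the clopen copy of $\mathbb{F}_1$.

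It remains to observe that $R^{\mathbb{F}}$ is an equivalence relation — this is immediate because $R^{\mathbb{F}_1}$ and $R^{\mathbb{F}_2}$ are (by hypothesis) equivalence relations on the respective clopen pieces and there are no edges between the pieces; reflexivity, symmetry and transitivity each check separately on each summand. Hence $\quot{\mathbb{F}}{R^{\mathbb{F}}}$ is the topological disjoint sum $\quot{\mathbb{F}_1}{R^{\mathbb{F}_1}} \sqcup \quot{\mathbb{F}_2}{R^{\mathbb{F}_2}} \cong X_1 \sqcup X_2$, and the language used is finite. The main point requiring care — the step I would expect to be the real content rather than bookkeeping — is the verification that every $\mathcal{L}_R$-epimorphism between two structures of $\mathcal{F}$ genuinely decomposes into a pair of epimorphisms of the original languages (so that no ``new'' epimorphisms appear that would break (AP) or alter the limit); this is exactly what forces the inclusion of the predicate $P$ and the convention that the auxiliary relations vanish on the wrong side, and it is where one uses the epimorphism condition $r^B = \varphi^{\times n}(r^A)$ in both directions. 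Everything else is a routine transfer of (JPP), (AP), and the fundamental-sequence axioms across the product decomposition.
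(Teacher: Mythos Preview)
Your approach is essentially the paper's: build $\mathcal F$ from disjoint sums $A_1\oplus A_2$, show that $\mathcal L_R$-epimorphisms between sums decompose coordinatewise, and then transfer (JPP), (AP) and the limit computation from the factors. (The paper verifies (L1), (L2$'$), (L3) for $\mathbb F_1\oplus\mathbb F_2$ directly rather than via fundamental sequences, but your route through fundamental sequences is equally valid given the preliminaries.)

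There is, however, one genuine slip, precisely at the step you flag as ``the real content'': a \emph{single} unary predicate $P$ does not force epimorphisms to split. For a unary symbol the epimorphism condition reads $P^{A'\sqcup B'}=\fhi(P^{A\sqcup B})$; this yields $\fhi(A)=A'$, but says nothing about where points of $B$ go. Concretely, take $\mathcal L_R^{(1)}=\mathcal L_R^{(2)}=\{R\}$, let $A=\{a\}$, $B=\{b_1,b_2\}$, $A'=\{a'\}$, $B'=\{b'\}$, with $R$ interpreted as equality throughout; then $\fhi(a)=a'$, $\fhi(b_1)=a'$, $\fhi(b_2)=b'$ is an $\{R,P\}$-epimorphism $A\sqcup B\to A'\sqcup B'$ that does not restrict to a map $B\to B'$. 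With such extra epimorphisms present, the coordinatewise reductions of (AP) and of the second fundamental-sequence axiom no longer go through. The paper avoids this by introducing \emph{two} predicates $P_1,P_2$, one for each summand: then the condition for $P_2$ forces $\fhi(B)\subseteq B'$ as well, and the decomposition is immediate. The fix is trivial---add the complementary predicate---but as written your key claim is false.
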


\begin{proof}
It is enough to prove the result for the disjoint sum of two spaces\footnote{Notice that for the sum of $n$ spaces, a direct proof would provide a smaller language than the one resulting by iterating the construction in the proof.}.
So, for $i\in\{ 1,2\} $ let $X_i$ be $ \mathcal L_R^i$-representable for some finite $ \mathcal L_R^i$, as witnessed by a projective Fra\"iss\'e family $ \mathcal F_i$ with limit $ \mathbb F_i$.
By proposition \ref{machrelassional} one can assume that each $ \mathcal L_R^i$ is a relational language, and moreover $ \mathcal L_R^1\cap \mathcal L_R^2=\{ R\} $.
Let $ \mathcal L_R= \mathcal L_R^1\cup \mathcal L_R^2\cup\{ P_1,P_2\} $, where $P_1,P_2$ are new unary relation symbols.

Given topological $ \mathcal L_R^i$-structures $A_i$, for $i\in\{ 1,2\} $, define an $ \mathcal L_R$-structure $A=A_1\oplus A_2$ as follows:

\begin{itemize}
\item $A$ is a disjoint union $A_1\cup A_2$, with each $A_i$ clopen in $A$;
\item $R^A=R^{A_1}\cup R^{A_2}$;
\item $P_i^A=A_i$;
\item if $S\in \mathcal L_R^i$ is a relation symbol different from $R$, then $S^A=S^{A_i}$.
\end{itemize}
Notice that if $ \fhi_i:A_i\to B_i$ are $ \mathcal L_R^i$-epimorphisms for $i\in\{ 1,2\} $, then $ \fhi_1\cup \fhi_2:A_1\oplus A_2\to B_1\oplus B_2$ is an $ \mathcal L_R$-epimorphism.
Conversely, if $ \fhi :A_1\oplus A_2\to B_1\oplus B_2$ is an $ \mathcal L_R$-epimorphism, then by the interpretations of symbols $P_1,P_2$, the restriction $ \fhi_i$ of $ \fhi $ to $A_i$ has range included in --- in fact, equal to --- $B_i$; moreover $ \fhi_i:A_i\to B_i$ is an $ \mathcal L_R^i$-epimorphism.

Define $ \mathcal F $ as the class of $ \mathcal L_R$-structures $A=(A,R^A,\ldots ,P_1^A,P_2^A)$ of the form $A=A_1\oplus A_2$, where $A_i\in \mathcal F_i$.

\begin{claim}
$ \mathcal F $ is a projective Fra\"iss\'e family.
\end{claim}

{\it Proof of claim.}
JPP:
Let $A=A_1\oplus A_2$, $B=B_1\oplus B_2\in \mathcal F $.
By (JPP) of $ \mathcal F_i$, let $C_i\in \mathcal F_i$, with epimorphisms $ \fhi_i:C_i\to A_i$, $\psi_i:C_i\to B_i$.
Set $C=C_1\oplus C_2\in \mathcal F $, $ \fhi = \fhi_1\cup \fhi_2:C\to A$, $\psi =\psi_1\cup\psi_2:C\to B$.
Then $ \fhi ,\psi $ are epimorphisms.

AP:
Let $A=A_1\oplus A_2$, $B=B_1\oplus B_2$, $C=C_1\oplus C_2\in \mathcal F $, with epimorphisms $ \fhi :B\to A$, $\psi :C\to A$.
So let $ \fhi_i= \fhi \restr{B_i} $, $\psi_i=\psi \restr{C_i} $, then $ \fhi_i:B_i\to A_i$, $\psi_i:C_i\to A_i$ are epimorphisms.
By (AP) for $ \mathcal F_i$, let $D_i \in \mathcal F_i$, $ \fhi'_i:D_i\to B_i$, $\psi'_i:D_i\to C_i$ be epimorphisms such that $ \fhi_i\fhi'_i=\psi_i\psi'_i$.
Let $D=D_1\oplus D_2\in \mathcal F $.
So $ \fhi'= \fhi'_1\cup \fhi'_2:D\to B$, $\psi'=\psi'_1\cup\psi'_2:D\to C$ are epimorphisms such that $ \fhi \fhi'=\psi\psi'$.
\qed

\medskip
Let $ \mathbb F = \mathbb F_1\oplus \mathbb F_2$.

\begin{claim}
$ \mathbb F $ is the projective Fra\"iss\'e limit of $ \mathcal F $.
\end{claim}

{\it Proof of claim.}
It is enough to carry out the following three verifications:
\begin{itemize}
\item (L1) Let $A=A_1\oplus A_2\in \mathcal F $.
By projective universality of $ \mathbb F_i$, let $ \fhi_i: \mathbb F_i\to A_i$ be an epimorphism.
Then $ \fhi = \fhi_1\cup \fhi_2: \mathbb F \to A$ is an epimorphism.
\item (L2$'$) Let $ \mathcal U $ be a partition of $ \mathbb F $ into clopen sets, which can be assumed to refine $\{ \mathbb F_1, \mathbb F_2\} $.
So $ \mathcal U \cap \mathcal P ( \mathbb F_i)$ is a partition of $ \mathbb F_i$ into clopen sets.
Let $A_i\in \mathcal F_i$ with an epimorphism $ \fhi_i: \mathbb F_i\to D_i$ refining $ \mathcal U \cap \mathcal P ( \mathbb F_i)$.
So $ \fhi = \fhi_1\cup \fhi_2: \mathbb F \to A_1\oplus A_2$ is an epimorphism refining $ \mathcal U $.
\item (L3) Let $A=A_1\oplus A_2\in \mathcal F $, with epimorphisms $ \fhi_1,\fhi_2: \mathbb F \to A$.
So $ \fhi_j \restr{ \mathbb F_i} $ are epimorphisms $ \mathbb F_i\to A_i$.
By projective ultrahomogeneity of $ \mathbb F_i$, let $\psi_i: \mathbb F_i\to \mathbb F_i$ be an isomorphism such that $ \fhi_1 \restr{ \mathbb F_i} \psi_i= \fhi_2 \restr{ \mathbb F_i} $.
So $\psi =\psi_1\cup\psi_2: \mathbb F \to \mathbb F $ is an isomorphism such that $ \fhi_1\psi = \fhi_2$.
\end{itemize}
\qed

Notice that $R^{ \mathbb F }$ is an equivalence relation on $ \mathbb F $ and $ \quot{\mathbb F }{R^{ \mathbb F }} $ is a disjoint sum of $ \quot{\mathbb F_1}{R^{ \mathbb F_1}} , \quot{ \mathbb F_2}{R^{ \mathbb F_2}}$, completing the proof.
\end{proof}

For later use we remark that in the proof of theorem \ref{closurefinitesums}, if $x$ is an almost stable point in one of the $ \mathbb F_i$, then $x$ is almost stable also in the resulting $ \mathbb F $.

\begin{theorem} \label{products}
The finite product of finitely representable spaces is finitely representable.
\end{theorem}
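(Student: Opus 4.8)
The plan is to follow the proof of Theorem~\ref{closurefinitesums} almost verbatim, with Cartesian products in place of disjoint unions and, in place of the marking predicates $P_1,P_2$, two binary ``same-coordinate'' relations together with coordinatewise copies of all the old symbols. As there it suffices to treat a product of two factors. So let $X_i$ be $\mathcal L_R^i$-representable, witnessed by a projective Fra\"iss\'e family $\mathcal F_i$ with limit $\mathbb F_i$, and by Proposition~\ref{machrelassional} assume each $\mathcal L_R^i$ relational with $\mathcal L_R^1\cap\mathcal L_R^2=\{R\}$. Let $\mathcal L_R$ consist of $R$, two new binary symbols $E_1,E_2$, and, for $i\in\{1,2\}$ and every relation symbol $S\in\mathcal L_R^i$, a fresh symbol $S^{(i)}$ of the same arity (so $\mathcal L_R$ contains $R^{(1)}$ and $R^{(2)}$); this is finite. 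Given $\mathcal L_R^i$-structures $A_i$, define $A=A_1\otimes A_2$ on the universe $A_1\times A_2$ by $E_i^A\big((a_1,a_2),(b_1,b_2)\big)\Leftrightarrow a_i=b_i$, by $S^{(i),A}(\bar u)\Leftrightarrow S^{A_i}(\bar u_i)$ for $S\in\mathcal L_R^i$ (where $\bar u_i$ is the tuple of $i$-th coordinates of the tuple $\bar u$), and by $R^A:=R^{(1),A}\cap R^{(2),A}$, i.e.\ $(a_1,a_2)R^A(b_1,b_2)\Leftrightarrow a_1R^{A_1}b_1\wedge a_2R^{A_2}b_2$. When the $A_i$ are finite all these relations are clopen, so $A$ is a topological $\mathcal L_R$-structure; let $\mathcal F$ be the class of all such $A_1\otimes A_2$ with $A_i\in\mathcal F_i$.

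The crux is the product analogue of the remark following Theorem~\ref{closurefinitesums}: a continuous surjection $\psi\colon B_1\otimes B_2\to A_1\otimes A_2$ is an $\mathcal L_R$-epimorphism if and only if $\psi=\psi_1\times\psi_2$ for some $\mathcal L_R^i$-epimorphisms $\psi_i\colon B_i\to A_i$. For the forward direction the key observation is that the epimorphism condition for $E_i$ entails $(\psi\times\psi)(E_i^B)\subseteq E_i^A$, which says precisely that the $i$-th coordinate of $\psi(b_1,b_2)$ does not depend on the other coordinate; applying this for $i=1$ and $i=2$ splits $\psi$ as $\psi_1\times\psi_2$ with $\psi_i\colon B_i\to A_i$ continuous and surjective. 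That each $\psi_i$ is an $\mathcal L_R^i$-epimorphism is then read off, symbol by symbol, from the epimorphism condition for $S^{(i)}$: feeding into it tuples whose coordinates in position $3-i$ are chosen arbitrarily and using surjectivity of $\psi_{3-i}$ shows that $\psi_i$ respects each $S\in\mathcal L_R^i$ (including $R$). The converse direction is the routine check that coordinatewise images of $R$, the $E_j$, and the $S^{(j)}$ under $\psi_1\times\psi_2$ come out correct, again using surjectivity of the ``other'' map to supply witnesses; the same computation shows that products of epimorphisms are epimorphisms, and in particular a product of isomorphisms is an isomorphism.

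Granting this, the rest parallels Theorem~\ref{closurefinitesums} line by line. For (JPP) and (AP) of $\mathcal F$ one decomposes the given epimorphisms into products of factor epimorphisms, applies (JPP), resp.\ (AP), of $\mathcal F_1$ and of $\mathcal F_2$, and takes the product of the resulting structures and maps. Setting $\mathbb F:=\mathbb F_1\otimes\mathbb F_2$ (same universe $\mathbb F_1\times\mathbb F_2$, with $R^{(i),\mathbb F}$ the pullback of $R^{\mathbb F_i}$, the $E_i^{\mathbb F}$ the equality-of-$i$-th-coordinate relations, and $R^{\mathbb F}=R^{(1),\mathbb F}\cap R^{(2),\mathbb F}$), one checks it is the limit of $\mathcal F$ via (L1), (L2$'$), (L3): (L1) by taking products of witnessing epimorphisms; (L2$'$) by first refining a given clopen partition of $\mathbb F_1\times\mathbb F_2$ by a ``box'' partition $\mathcal U_1\times\mathcal U_2$ with $\mathcal U_i$ a clopen partition of $\mathbb F_i$ (a clopen subset of a product of compact zero-dimensional spaces being a finite union of clopen boxes), then choosing $\varphi_i\colon\mathbb F_i\to A_i$ refining $\mathcal U_i$ and letting $\varphi_1\times\varphi_2$ refine $\mathcal U$; (L3) by decomposing the two epimorphisms $\mathbb F\to A$ as products, invoking projective ultrahomogeneity of $\mathbb F_1$ and of $\mathbb F_2$ to obtain automorphisms $\psi_i$ of $\mathbb F_i$, and noting $\psi_1\times\psi_2$ is the required automorphism of $\mathbb F$. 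Finally $R^{\mathbb F}=R^{(1),\mathbb F}\cap R^{(2),\mathbb F}$ is the product of the equivalence relations $R^{\mathbb F_1},R^{\mathbb F_2}$, hence an equivalence relation, and the natural continuous bijection identifies $\quot{\mathbb F}{R^{\mathbb F}}$ with $(\quot{\mathbb F_1}{R^{\mathbb F_1}})\times(\quot{\mathbb F_2}{R^{\mathbb F_2}})\cong X_1\times X_2$.

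The main obstacle is precisely the ``product-epi lemma'': exhibiting a \emph{finite} set of additional relation symbols that forces every epimorphism between product structures to split as a product of factor epimorphisms. The relations $E_1,E_2$ do the splitting and the copies $S^{(i)}$ transfer the remaining structure; once this is secured, all the Fra\"iss\'e-theoretic verifications reduce to bookkeeping on top of the corresponding facts for $\mathcal F_1$ and $\mathcal F_2$, exactly as in the disjoint-sum case. (As there, one also records for later use that if $x_i$ is almost stable in $\mathbb F_i$ then $(x_1,x_2)$ is almost stable in $\mathbb F$, since by the lemma every automorphism of $\mathbb F$ has the form $\psi_1\times\psi_2$.)
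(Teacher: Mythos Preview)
Your proof is correct and follows essentially the same approach as the paper's: the paper likewise adds two binary ``same-coordinate'' relations (called $r_1,r_2$ there) and interprets the remaining symbols coordinatewise, proving the identical product-epimorphism splitting lemma as its key claim. The only cosmetic differences are that the paper reuses the symbols of $\mathcal L_R^1\cup\mathcal L_R^2$ rather than introducing fresh copies $S^{(i)}$ (so it has no separate $R^{(1)},R^{(2)}$ and must invoke reflexivity of $R$ when extracting that $\psi_i$ respects $R$), and it identifies the limit via a product of fundamental sequences rather than by checking (L1), (L2$'$), (L3) directly.
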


\begin{proof}
It is enough to prove the assertion for products of two factors\footnote{Remarks about the language similar to those in theorem \ref{closurefinitesums} apply here.}.
So, for $i\in\{ 1,2\} $ let $X_i$ be $ \mathcal L_R^i$-representable, for some finite $ \mathcal L_R^i$, as witnessed by a projective Fra\"iss\'e family $ \mathcal F_i$ with limit $ \mathbb F_i$.
By proposition \ref{machrelassional} it can be assumed that $ \mathcal L_R^1, \mathcal L_R^2$ are relational languages, and moreover $ \mathcal L_R^1\cap \mathcal L_R^2=\{ R\} $.
Let $ \mathcal L_R= \mathcal L_R^1\cup \mathcal L_R^2\cup\{ r_1,r_2\} $, where $r_1,r_2$ are two new binary relation symbols.
Let $ \mathcal F =\{ A\times B\mid A\in \mathcal F_1,B\in \mathcal F_2\} $ where:
\begin{itemize}
\item $(a,b)R^{A\times B}(a',b')\Leftrightarrow aR^Aa'\wedge bR^Bb'$;
\item $S^{A\times B}((a_1,b_1),\ldots ,(a_m,b_m))\Leftrightarrow S^A(a_1,\ldots ,a_m)$ for any $m$-ary relation symbol $S\in \mathcal L_R^1\setminus\{ R\} $;
\item $S^{A\times B}((a_1,b_1),\ldots ,(a_m,b_m))\Leftrightarrow S^B(b_1,\ldots ,b_m)$ for any $m$-ary relation symbol $S\in \mathcal L_R^2\setminus\{ R\} $;
\item $r_1^{A\times B}((a_1,b_1),(a_2,b_2))\Leftrightarrow a_1=a_2$;
\item $r_2^{A\times B}((a_1,b_1),(a_2,b_2))\Leftrightarrow b_1=b_2$.
\end{itemize}

\begin{claim}
$ \fhi :A\times B\to C\times D$ is an epimorphism if and only if $ \fhi =\psi\times\theta $ for some epimorphisms $\psi :A\to C,\theta :B\to D$.
\end{claim}

{\it Proof of claim.}
\sloppypar{
Let $ \fhi :A\times B\to C\times D$ be an epimorphism.
Since $r_1^{A\times B}((a,b_1),(a,b_2))$, from $r_1^{C\times D}( \fhi (a,b_1), \fhi (a,b_2))$ it follows that $ \fhi (a,b_1), \fhi (a,b_2)$ have the same first component; similarly for $ \fhi (a_1,b), \fhi (a_2,b)$.
This means that $ \fhi =\psi\times\theta $ for some surjective $\psi :A\to C$, $\theta :B\to D$.
It remains to prove that $\psi $, and similarly $\theta $, are epimorphisms.}

Suppose $cR^Cc'$.
Since $R^D$ is reflexive, by reflexivity of $R^{ \mathbb F}$ and proposition \ref{arflessivsimetric}(1), it follows that for any $d\in D$ one has $(c,d)R^{C\times D}(c',d)$.
So there are $(a,b),(a',b')\in A\times B$ such that $ \fhi (a,b)=(c,d)$, $ \fhi (a',b')=(c',d)$, $(a,b)R^{A\times B}(a',b')$.
Consequently, $\psi (a)=c$, $\psi (a')=c'$, $aR^Aa'$.
Conversely, if $aR^Aa'$, for any $b\in B$ one has $(a,b)R^{A\times B}(a',b)$, whence $(\psi (a),\theta (b))R^{C\times D}(\psi (a'),\theta (b))$, so $\psi(a)R^C\psi (a')$.

Let $S\in \mathcal L_R^1\setminus\{ R\} $ be an $m$-ary relation symbol.
If $S^C(c_1,\ldots ,c_m)$, for any $d\in D$ one has $S^{C\times D}((c_1,d),\ldots ,(c_m,d))$.
Let $a_1,\ldots ,a_m\in A$, $b_1,\ldots ,b_m\in B$ with $ \fhi (a_1,b_1)=(c_1,d),\ldots , \fhi (a_m,b_m)=(c_m,d)$, $S^{A\times B}((a_1,b_1),\ldots ,(a_m,b_m))$.
This implies $\psi (a_1)=c_1$, $\ldots ,\psi (a_m)=c_m$, $S^A(a_1,\ldots ,a_m)$.
Conversely, whenever $S^A(a_1,\ldots ,a_m)$, picking any $b\in B$, one has $S^{A\times B}((a_1,b),\ldots ,(a_m,b))$, whence $S^{C\times D}( \fhi (a_1,b),\ldots , \fhi (a_m,b))$, which allows to conclude that $S^C(\psi (a_1),\ldots ,\psi (a_m))$.

Assume now $\psi :A\to C$, $\theta :B\to D$ are epimorphisms, and set $ \fhi =\psi\times \theta $.
Then, for any $(c,d),(c',d')\in C\times D$,
\[
\begin{split}
(c,d) & R^{C\times D}(c',d')\Leftrightarrow cR^Cc'\wedge dR^Dd'\Leftrightarrow \\
 & \Leftrightarrow\exists a,a'\in A\ \exists b,b'\in B \\
 & \ \ \ (\psi (a)=c\wedge\psi (a')=c'\wedge\theta (b)=d\wedge\theta (b')=d'\wedge aR^Aa'\wedge bR^Bb')\Leftrightarrow \\
 & \Leftrightarrow\exists a,a'\in A\ \exists b,b'\in B \\
 & \ \ \ ( \fhi (a,b)=(c,d)\wedge \fhi (a',b')=(c',d')\wedge (a,b)R^{A\times B}(a',b')).
\end{split}
\]
Moreover, if $S\in \mathcal L_R^1\setminus\{ R\} $ is an $m$-ary relation symbol and $S^{A\times B}((a_1,b_1),\ldots ,(a_m,b_m))$, then $S^A(a_1,\ldots ,a_m)$, whence $S^C(\psi (a_1),\ldots ,\psi (a_m))$ and finally $S^{C\times D}( \fhi (a_1,b_1),\ldots , \fhi (a_m,b_m))$.
Conversely, suppose $S^{C\times D}((c_1,d_1),\ldots ,(c_m,d_m))$, which is equivalent to $S^C(c_1,\ldots ,c_m)$.
So there are $a_1,\ldots ,a_m\in A$ such that $\psi (a_1)=c_1,\ldots ,\psi (a_m)=c_m$, $S^A(a_1,\ldots ,a_m)$.
Taking any $b_1,\ldots ,b_m\in B$ such that $\theta (b_1)=d_1,\ldots ,\theta (b_m)=d_m$, one has $ \fhi (a_1,b_1)=(c_1,d_1),\ldots , \fhi (a_m,b_m)=(c_m,d_m)$, $S^{A\times B}((a_1,b_1),\ldots ,(a_m,b_m))$.
Similarly for symbols in $ \mathcal L_R^2$.
\qed

\begin{claim}
$ \mathcal F $ is a projective Fra\"iss\'e family.
\end{claim}

{\it Proof of claim.}
JPP:
Let $A\times B$, $C\times D\in \mathcal F $.
By (JPP) of $ \mathcal F_1$ and $ \mathcal F_2$, let $E\in \mathcal F_1$, $F\in \mathcal F_2$ with epimorphisms $ \fhi_1:E\to A$, $ \fhi_2:E\to C$, $\psi_1:F\to B$, $\psi_2:F\to D$.
Then $ \fhi_1\times\psi_1:E\times F\to A\times B$, $ \fhi_2\times\psi_2:E\times F\to C\times D$ are epimorphisms.

AP:
Let $A_1\times A_2$, $B_1\times B_2$, $C_1\times C_2\in \mathcal F $ with epimorphisms $ \fhi :B_1\times B_2\to A_1\times A_2$, $\psi :C_1\times C_2\to A_1\times A_2$.
By the preceding claim, there are epimorphims $ \fhi_1, \fhi_2,\psi_1,\psi_2$ such that $ \fhi = \fhi_1\times \fhi_2$, $\psi =\psi_1\times\psi_2$.
Using (AP) of $ \mathcal F_1$, $ \mathcal F_2$, let $D_1\in \mathcal F_1$, $D_2\in \mathcal F_2$ with epimorphisms $\theta_1:D_1\to B_1$, $\rho_1:D_1\to C_1,\theta_2:D_2\to B_2$, $\rho_2:D_2\to C_2$ be such that $ \fhi_1\theta_1=\psi_1\rho_1$, $ \fhi_2\theta_2=\psi_2\rho_2$.
Thus $\theta_1\times\theta_2:D_1\times D_2\to B_1\times B_2$, $\rho_1\times\rho_2:D_1\times D_2\to C_1\times C_2$ are epimorphisms such that $ \fhi (\theta_1\times\theta_2)=\psi (\rho_1\times\rho_2)$.
\qed

\medskip
Let now $(A_n,\pi_n ),(B_n,\rho_n)$ be fundamental sequences for $ \mathcal F_1, \mathcal F_2$, respectively.

\begin{claim}
$(A_n\times B_n,\pi_n\times\rho_n)$ is a fundamental sequence for $ \mathcal F $.
\end{claim}

{\it Proof of claim.}
Let $A\times B\in \mathcal F $.
There are $n,m\in \N $ and epimorphisms $ \fhi:A_n\to A$, $\psi :B_m\to B$.
If $n\leq m$, then $( \fhi\pi_n^m)\times\psi :A_m\times B_m\to A\times B$ is an epimorphism; otherwise, $ \fhi \times (\psi\rho_m^n):A_n\times B_n\to A\times B$ is.

Let now $E_1\times E_2$, $F_1\times F_2\in \mathcal F $, $n\in \N $, with epimorphisms $ \fhi_1\times \fhi_2:F_1\times F_2\to E_1\times E_2$, $\psi_1\times\psi_2:A_n\times B_n\to E_1\times E_2$.
Let $m,m'\geq n$ with epimorphisms $\theta_1:A_m\to F_1$, $\theta_2:B_{m'}\to F_2$ be such that $ \fhi_1\theta_1=\psi_1\pi_n^m$, $ \fhi_2\theta_2=\psi_2\rho_n^{m'}$.
Suppose for instance that $m\leq m'$.
Then $( \fhi_1\times \fhi_2)((\theta_1\pi_m^{m'})\times\theta_2)=(\psi_1\times\psi_2)(\pi_n^{m'}\times\rho_n^{m'}):A_{m'}\times B_{m'}\to E_1\times E_2$.
\qed

\medskip
So $ \mathbb F= \mathbb F_1\times \mathbb F_2$ is the support of the projective Fra\"iss\'e limit of $ \mathcal F $.
Moreover, denoting $\pi_n^{\infty }: \mathbb F_1\to A_n$, $\rho_n^{\infty }: \mathbb F_2\to B_n$ the projections of the limits onto the members of the fundamental sequences, and given $(a,b),(a',b')\in \mathbb F $,
\[
\begin{split}
& (a,b)R^{ \mathbb F }(a',b')\Leftrightarrow\forall n\in \N \ (\pi_n^{\infty }(a),\rho_n^{\infty }(b))R^{A_n\times B_n}(\pi_n^{\infty }(a'),\rho_n^{\infty }(b'))\Leftrightarrow \\
& \Leftrightarrow\forall n\in \N \ (\pi_n^{\infty }(a)R^{A_n}\pi_n^{\infty }(a')\wedge\rho_n^{\infty }(b)R^{B_n}\rho_n^{\infty }(b'))\Leftrightarrow aR^{ \mathbb F_1}a'\wedge bR^{ \mathbb F_2}b'.
\end{split}
\]
So $ \quot{ \mathbb F}{R^{ \mathbb F }}$ is homeomorphic to $ \quot{ \mathbb F_1}{R^{ \mathbb F_1}}\times \quot{ \mathbb F_2}{R^{ \mathbb F_2}}$.
\end{proof}

\begin{theorem} \label{identification}
Let $X$ be a finitely representable metric space; say this is witnessed by a language $ \mathcal L_R$ and a homeomorphism $\Phi :X\to \quot{ \mathbb F }{R^{ \mathbb F }} $.
Let $G$ be the set of all almost stable points of $ \mathbb F $.
Let $\equiv $ be a closed equivalence relation on $ \mathbb F $ such that $R^{ \mathbb F }\subseteq {\equiv } , {\equiv }\setminus G^2=R^{ \mathbb F }\setminus G^2$.
Let $\cong $ be the equivalence relation defined on $X$ by letting
\[
x\cong y\Leftrightarrow\exists u,v\in \mathbb F\ (u\equiv v\wedge\pi (u)=\Phi (x)\wedge\pi (v)=\Phi (y))
\]
where $\pi : \mathbb F \to \quot{ \mathbb F }{R^{ \mathbb F }} $ is the quotient map.

Then $X'= \quot X{\cong } $ is finitely representable.
\end{theorem}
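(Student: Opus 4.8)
The plan is to reuse the family witnessing that $X$ is finitely representable, merely adjoining to the language one new binary relation symbol $E$, which will become the distinguished relation and whose interpretation in the new limit will be $\equiv$ itself; then $\quot{\mathbb F'}{E^{\mathbb F'}}$ will coincide with $\quot{\mathbb F}{\equiv}$, and I will check that this is homeomorphic to $X'$. Concretely, invoking Proposition \ref{machrelassional} I may assume $\mathcal L_R$ is relational; set $\mathcal L'=\mathcal L_R\cup\{E\}$ with $E$ the distinguished binary symbol. For each $A$ in the given family $\mathcal F$, use projective universality to pick an epimorphism $q\colon\mathbb F\to A$ and put $E^{A'}=(q\times q)(\equiv)$, obtaining an $\mathcal L'$-expansion $A'$ of $A$; let $\mathcal F'=\{A'\mid A\in\mathcal F\}$.

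The first and main step is to show that $E^{A'}$ does not depend on the choice of $q$. By projective ultrahomogeneity, any two epimorphisms $\mathbb F\to A$ differ by an automorphism of $\mathbb F$, so it suffices to prove that every automorphism $\psi$ of $\mathbb F$ preserves $\equiv$. Since $\psi$ preserves $R^{\mathbb F}$, it preserves $\equiv$ on pairs lying in $R^{\mathbb F}$; and if $u\equiv v$ with $(u,v)\notin R^{\mathbb F}$, then $(u,v)\in G^2$ because $\equiv\setminus G^2=R^{\mathbb F}\setminus G^2$, so $u$ and $v$ are almost stable and hence $\psi(u)\,R^{\mathbb F}\,u\equiv v\,R^{\mathbb F}\,\psi(v)$; as $R^{\mathbb F}\subseteq{\equiv}$ and $\equiv$ is transitive, $\psi(u)\equiv\psi(v)$. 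This is exactly where the hypotheses on $G$ and on $\equiv$ enter, and I expect it to be the only genuinely delicate point of the argument.

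Once $A\mapsto A'$ is well defined, the rest is a routine transfer. For $A,B\in\mathcal F$, a map is an $\mathcal L_R$-epimorphism $A\to B$ if and only if it is an $\mathcal L'$-epimorphism $A'\to B'$: one direction is immediate since $\mathcal L_R\subseteq\mathcal L'$, and for the other, if $\varphi\colon A\to B$ is an $\mathcal L_R$-epimorphism and $q\colon\mathbb F\to A$ is an epimorphism, then $E^{B'}=\bigl((\varphi q)\times(\varphi q)\bigr)(\equiv)=(\varphi\times\varphi)(E^{A'})$. Thus the bijection $A\mapsto A'$ preserves epimorphisms in both directions, so (JPP) and (AP) for $\mathcal F$ immediately give (JPP) and (AP) for $\mathcal F'$, and $\mathcal F'$ is an at most countable projective Fra\"iss\'e family of finite topological $\mathcal L'$-structures. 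Fixing a fundamental sequence $(D_n,\pi_n)$ for $\mathcal F$, the same correspondence makes $(D_n',\pi_n)$ a fundamental sequence for $\mathcal F'$, so a projective Fra\"iss\'e limit $\mathbb F'$ of $\mathcal F'$ is the inverse limit of $(D_n',\pi_n)$; since the $D_n'$ carry the same underlying $\mathcal L_R$-structures as the $D_n$, its $\mathcal L_R$-reduct is $\mathbb F$, and $x\,E^{\mathbb F'}\,y$ holds iff $(p_n(x),p_n(y))\in(p_n\times p_n)(\equiv)$ for all $n$, where $p_n\colon\mathbb F\to D_n$ are the projections. Because $\equiv$ is closed, a standard approximation argument — lift each coordinate to a genuine $\equiv$-pair and pass to the limit — shows this is equivalent to $x\equiv y$, so $E^{\mathbb F'}={\equiv}$, which is an equivalence relation, and $\quot{\mathbb F'}{E^{\mathbb F'}}=\quot{\mathbb F}{\equiv}$.

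Finally I would identify $\quot{\mathbb F}{\equiv}$ with $X'$. Since $R^{\mathbb F}\subseteq{\equiv}$, the quotient map $\mathbb F\to\quot{\mathbb F}{\equiv}$ factors as $\bar\pi\circ\pi$ for a continuous surjection $\bar\pi\colon\quot{\mathbb F}{R^{\mathbb F}}\to\quot{\mathbb F}{\equiv}$. Directly from the definition of $\cong$, using surjectivity of $\pi$ for the nontrivial implication, one obtains $x\cong y\Leftrightarrow\bar\pi(\Phi(x))=\bar\pi(\Phi(y))$, so $\bar\pi\circ\Phi\colon X\to\quot{\mathbb F}{\equiv}$ is a continuous surjection whose fibres are precisely the $\cong$-classes; it therefore induces a continuous bijection $\quot X{\cong}\to\quot{\mathbb F}{\equiv}$, which is a homeomorphism as both spaces are compact metric. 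Hence $X'=\quot X{\cong}$ is $\mathcal L'$-representable and $\mathcal L'$ is finite, i.e.\ $X'$ is finitely representable.
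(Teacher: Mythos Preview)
Your proof is correct and follows essentially the same strategy as the paper's: adjoin one new binary symbol, interpret it in each $A\in\mathcal F$ as the image of $\equiv$ under an epimorphism $\mathbb F\to A$, verify well-definedness via the almost-stable hypothesis, check that $\mathcal L_R$- and $\mathcal L'$-epimorphisms coincide so that the Fra\"iss\'e properties transfer, and identify the distinguished relation on the new limit with $\equiv$. The only differences are presentational---you phrase well-definedness as ``automorphisms preserve $\equiv$'' and compute $E^{\mathbb F'}$ via the inverse-limit description rather than (L2$'$), and you spell out the final homeomorphism $X'\cong\quot{\mathbb F}{\equiv}$ more explicitly than the paper does.
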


\begin{proof}
By proposition \ref{machrelassional} and the remark following it, we can assume that $ \mathcal L_R$ is a relational language.
Let $ \mathcal F $ be a projective Fra\"iss\'e family of finite topological $ \mathcal L_R$-structures of which $ \mathbb F $ is a projective Fra\"iss\'e limit.

Notice that since $G$ is $R^{ \mathbb F }$-invariant, it is also $\equiv $-invariant.

\begin{claim} \label{eightone}
Assume that $A\in \mathcal F $ and let $ \fhi ,\psi: \mathbb F \to A$ be $ \mathcal L_R$-epimorphisms.
Then $\fhi \! \times \! \fhi \,(\equiv )=\psi \! \times \! \psi \,(\equiv )$, that is, if $a,b\in A$ then there are $u,v\in \mathbb F $ such that $ \fhi (u)=a$, $ \fhi (v)=b$, $u\equiv v$ if and only if there are $u',v'\in \mathbb F $ such that $\psi (u')=a$, $\psi (v')=b$, $u'\equiv v'$.
\end{claim}

{\it Proof of claim.}
Let $\alpha : \mathbb F \to \mathbb F $ be an isomorphism such that $ \fhi =\psi\alpha $.
Assume that $a,b\in A$, $u,v\in \mathbb F $ are such that $ \fhi (u)=a$, $ \fhi (v)=b$, $u\equiv v$.
Denote $u'=\alpha (u)$, $v'=\alpha (v)$, so that $\psi (u')=a$, $\psi (v')=b$.
By the $\equiv $-invariance of $G$, we have that either $u,v$ are both in $G$ or they are both outside $G$.
If $u,v\notin G$, then $uR^{ \mathbb F }v$, so that $u'R^{ \mathbb F }v'$ and consequently $u'\equiv v'$.
If instead $u,v\in G$, then $u'R^{ \mathbb F}u\equiv vR^{ \mathbb F }v'$ and again $u'\equiv v'$ and we are done.
\qed

\medskip
Set $ \mathcal L'_S= \mathcal L_R\cup\{ S\} $, where $S$ is a new binary relation symbol.
For every $A\in \mathcal F $ let $A'$ be the expansion of $A$ to $ \mathcal L'_S$ defined by letting $S^{A'}= \fhi \! \times \!  \fhi \,(\equiv )$ for any arbitrary $ \mathcal L_R$-epimorphism $ \fhi : \mathbb F \to A$.
Let $ \mathcal F'=\{ A'\}_{A\in \mathcal F }$.

\begin{claim}
Given $A,B\in \mathcal F $, a function $ \fhi :A\to B$ is an $ \mathcal L_R$-epimorphism if and only if it is an $ \mathcal L'_S$-epimorphism from $A'$ to $B'$.
\end{claim}

{\it Proof of claim.}
The backward implication holds as $A',B'$ are expansions of $A,B$, respectively.

For the forward direction, it is enough to show that $ \fhi $ respects $S$.
So let $a,b\in A$ be such that $aS^{A'}b$; pick any $ \mathcal L_R$-epimorphism $\psi : \mathbb F \to A$ and let $u,v\in \mathbb F $ be such that $\psi (u)=a$, $\psi (v)=b$, $u\equiv v$.
So, by claim \ref{eightone}, $u,v$, together with the $ \mathcal L_R$-epimorphism $ \fhi \psi : \mathbb F \to B$, witness that $ \fhi (a)S^{B'} \fhi (b)$.
Conversely, let $a,b\in B$ be such that $aS^{B'}b$ and fix an arbitrary $ \mathcal L_R$-epimorphism $\psi : \mathbb F \to B$; then there are $u,v\in \mathbb F $ such that $a=\psi (u)$, $b=\psi (v)$, $u\equiv v$.
Let $\theta : \mathbb F \to A$ be an $ \mathcal L_R$-epimorphism such that $ \fhi \theta =\psi $; such an epimorphism exists by combining (L1) and (L3).
Then, again by claim \ref{eightone}, $\theta (u)S^{A'}\theta (v)$, $ \fhi \theta (u)=a$, $ \fhi \theta (v)=b$ and we are done.
\qed

\medskip
By the claim, $ \mathcal F'$ is a projective Fra\"iss\'e family and a projective Fra\"iss\'e limit $ \mathbb F'$ of $ \mathcal F'$ is an expansion of $ \mathbb F $ to $ \mathcal L'_S$.
As for the interpretation of $S$ in $ \mathbb F'$, we have the following.

\begin{claim}
$S^{ \mathbb F'}={\equiv} $.
\end{claim}

{\it Proof of claim.}
Let $u,v\in\mathbb F'$ and assume first $uS^{ \mathbb F'}v$.
By the closure of $\equiv $, to show $u\equiv v$ it is enough to prove that for any clopen neighbourhoods $U,V$ of $u,v$, respectively, there are $u'\in U$, $v'\in V$ with $u'\equiv v'$, where we can take $U=V$ if $u=v$, and $U\cap V=\emptyset $ otherwise.
So let $A'\in \mathcal F'$ with an epimorphism $ \fhi : \mathbb F'\to A'$ refining the clopen covering $\{U,V, \mathbb F'\setminus(U\cup V)\} $.
Since $ \fhi (u)S^{A'} \fhi (v)$, there are $u',v'\in \mathbb F'$ with $ \fhi (u')= \fhi (u)$, $ \fhi(v')= \fhi (v)$, $u'\equiv v'$. Since it follows that $u'\in U,v'\in V$, we are done.

Conversely, suppose $u\equiv v$.
Again, fix any clopen neighbourhoods $U,V$ of $u,v$, respectively, such that $U=V$ if $u=v$, and $U,V$ disjoint otherwise.
Pick $A'\in \mathcal F'$ and an epimorphism $ \fhi : \mathbb F' \to A'$ refining the clopen covering $\{ U,V, \mathbb F' \setminus (U\cup V)\} $.
Since $ \fhi (u)S^{A'} \fhi (v)$, there are $u',v'\in \mathbb F'$ (actually $u'\in U,v'\in V$) with $u'S^{ \mathbb F'}v'$, and we are done again.
\qed

\medskip
To finish the proof, notice that $X'$ is homeomorphic to $ \quot{ \mathbb F }{\equiv } $.
\end{proof}

\section{Arcs, hypercubes, graphs} \label{graphs}
We now apply the results of the preceding sections to demonstrate the finite representability of some classes of continua.
We begin by establishing the following.

\begin{theorem} \label{repnarcs}
Arcs are finitely representable.
\end{theorem}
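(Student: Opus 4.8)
The plan is to realize an arc as $\quot{\mathbb F}{R^{\mathbb F}}$ for the projective Fra\"iss\'e limit of the family of \emph{finite linearly ordered paths}, working in the finite relational language $\mathcal L_R=\{R,\preceq\}$ with $R,\preceq$ binary. For $k\geq 0$ let $P_k$ be the $\mathcal L_R$-structure with universe $\{0,1,\dots,k\}$, with $\preceq^{P_k}$ the usual order and $iR^{P_k}j\Leftrightarrow|i-j|\leq 1$, and let $\mathcal F$ be the (countable, non-empty) class of all structures isomorphic to some $P_k$. The first step is the observation that the $\mathcal L_R$-epimorphisms between members of $\mathcal F$ are exactly the monotone surjections: any epimorphism preserves $\preceq$, hence is monotone, hence collapses each fibre to an interval of consecutive integers with consecutive fibres adjacent; and such a map is automatically $1$-Lipschitz for the path metric, which is precisely what forces $R$ to be preserved \emph{and} reflected, while monotonicity plus surjectivity gives the reflection of $\preceq$. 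This is the point where adding $\preceq$ to the language matters: in the pure language $\{R\}$ one also has ``folding'' epimorphisms, and the limit yields the pseudo-arc rather than an arc.

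Next I would check that $\mathcal F$ is a projective Fra\"iss\'e family. (JPP) is immediate, taking for $D,E$ a path long enough to surject monotonically onto each. For (AP), given epimorphisms $\varphi_1:D\to C$ and $\varphi_2:E\to C$, for each $i$ in the universe of $C$ let $D_i=\varphi_1^{-1}(i)$, $E_i=\varphi_2^{-1}(i)$ (intervals in $D$, $E$), pick a path $F_i$ of length $\max(|D_i|,|E_i|)$ with monotone surjections $F_i\to D_i$, $F_i\to E_i$, and let $F$ be the concatenation of the $F_i$ with $\psi_1,\psi_2$ the induced maps; these are monotone surjections, hence epimorphisms, and $\varphi_1\psi_1=\varphi_2\psi_2$ since both are constant equal to $i$ on $F_i$. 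By the Proposition of Section~\ref{defandnot}, $\mathcal F$ has a projective Fra\"iss\'e limit $\mathbb F$, and $\mathbb F$ is the inverse limit of any fundamental sequence.

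I would then exhibit such a fundamental sequence: $D_n:=P_{2^n}$ with $\pi_n^{n+1}:D_{n+1}\to D_n$ given by $k\mapsto\lfloor k/2\rfloor$. Cofinality is clear, since a monotone surjection $D_n\to P_k$ exists as soon as $2^n\geq k$. The lifting property --- given $n$ and epimorphisms $\varphi_1:F\to E$, $\varphi_2:D_n\to E$, produce $m\geq n$ and an epimorphism $\psi:D_m\to F$ with $\varphi_1\psi=\varphi_2\pi_n^m$ --- is the substantive step; it is handled by the same block-subdivision idea. The fibre of $\varphi_2\pi_n^m$ over a point $j$ of $E$ is an interval $K_j\subseteq D_m$ of size at least $2^{m-n}$ (a positive multiple of it), while the fibre $J_j=\varphi_1^{-1}(j)\subseteq F$ has size at most $|F|$; choosing $m$ with $2^{m-n}\geq|F|$ one picks a monotone surjection $K_j\to J_j$ for each $j$, and monotonicity glues these into a monotone surjection $\psi:D_m\to F$ with $\psi(K_j)=J_j$, so that $\varphi_1\psi=\varphi_2\pi_n^m$. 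I expect this verification, together with the identification of the limit below, to carry essentially all the content; each is routine once set up, but it is where an error could creep in.

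Finally I would identify $\mathbb F=\varprojlim(D_n,\pi_n^{n+1})$. Its universe is the set of sequences $(x_n)$ with $x_{n+1}\in\{2x_n,2x_n+1\}$, and the map $q:\mathbb F\to[0,1]$, $q((x_n))=\lim_n x_n/2^n$, is a well-defined continuous surjection, since $x_n/2^n$ is nondecreasing, bounded by $1$, and determines $q((x_n))$ to within $2^{-n}$. A direct computation with binary expansions shows that the fibres of $q$ are exactly the $R^{\mathbb F}$-classes: if $q(x)=q(y)$ then $|x_n-y_n|/2^n\to 0$, and conversely two sequences with a common limit either coincide (non-dyadic limit, and the endpoints $0$, $1$) or, for a dyadic limit in $(0,1)$ with the two expansions of the form $\cdots\beta100\cdots$ and $\cdots\beta011\cdots$, differ by exactly $\pm1$ in every coordinate past some index, hence satisfy $xR^{\mathbb F}y$. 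Thus $R^{\mathbb F}$ is the kernel of $q$, so it is an equivalence relation, and $q$ factors through $\quot{\mathbb F}{R^{\mathbb F}}$ as a continuous bijection from a compact space onto the Hausdorff space $[0,1]$, hence a homeomorphism. Therefore $[0,1]$ is $\mathcal L_R$-representable in the finite language $\mathcal L_R=\{R,\preceq\}$, and since finite representability is a homeomorphism invariant, every arc is finitely representable.
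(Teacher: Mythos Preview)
Your setup coincides with the paper's: the same language $\{R,\preceq\}$ and the same class of finite linear orders with $R$ the ``consecutive-or-equal'' relation, together with essentially the same verification of (AP). Where you diverge is in identifying the quotient. The paper argues abstractly, using Proposition~\ref{arflessivsimetric} to show that $\preceq^{\mathbb X}$ is a total order with endpoints, that each $R^{\mathbb X}$-class has at most two (necessarily $\preceq^{\mathbb X}$-consecutive) elements, and then that the induced order on $\quot{\mathbb X}{R^{\mathbb X}}$ is dense, complete, separable, and has endpoints, hence has order type $1+\lambda+1$. You instead produce a concrete inverse-limit model and an explicit quotient map $q$ to $[0,1]$ via binary expansions. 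Both strategies are legitimate; yours trades a chain of order-theoretic lemmas for a direct computation, and has the side benefit of giving an explicit model of the limit.

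There is, however, a genuine error in your verification that $(P_{2^n},\lfloor\cdot/2\rfloor)$ is a fundamental sequence. Because $|P_{2^n}|=2^n+1$, the bonding map $k\mapsto\lfloor k/2\rfloor$ has every fibre of size $2$ \emph{except} the fibre over the top point $2^n$, which is the singleton $\{2^{n+1}\}$; iterating, $(\pi_n^m)^{-1}(\{2^n\})=\{2^m\}$ for every $m\ge n$. So your claim that each $K_j$ has size at least $2^{m-n}$ fails at the top. Concretely, take $n=1$, $E=P_1$, $F=P_2$, let $\varphi_1:F\to E$ have $\varphi_1^{-1}(1)=\{1,2\}$, and let $\varphi_2:D_1\to E$ have $\varphi_2^{-1}(1)=\{2\}$. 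Any $\psi:D_m\to F$ with $\varphi_1\psi=\varphi_2\pi_1^m$ would have to send $\{0,\dots,2^m-1\}$ to $0$ and $2^m$ into $\{1,2\}$, so $\psi$ could not be surjective; hence the lifting clause fails for every $m$, and your sequence is not fundamental. The repair is painless: take $D_n=P_{2^n-1}$ (so $|D_n|=2^n$) with the same bonding maps. Then every fibre of $\pi_n^m$ has size exactly $2^{m-n}$, your block-by-block construction of $\psi$ goes through verbatim, and the identification of the limit and of the $R^{\mathbb F}$-classes as the fibres of $q$ is unaffected.
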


We prove theorem \ref{repnarcs} through a sequence of lemmas.

Let $ \mathcal L_R=\{ R,\leq\} $, where $\leq $ is a binary relation symbol.
Let $ \mathcal X $ be the class of those finite topological $ \mathcal L_R$-structures $A$ such that:
\begin{itemize}
\item $\leq^A$ is a total order;
\item $aR^Ab$ if and only if $a=b$ or $a,b$ are $\leq^A$-consecutive.
\end{itemize}

\begin{lemma}
Class $ \mathcal X $ is a projective Fra\"iss\'e family.
\end{lemma}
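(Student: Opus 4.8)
The plan is to verify the two defining properties of a projective Fra\"iss\'e family, (JPP) and (AP), for the class $ \mathcal X $ of finite totally ordered structures with $R$ interpreted as the reflexive-consecutive relation. Since (JPP) follows from (AP) by amalgamating two structures over any fixed one-element structure (note that $ \mathcal X $ is nonempty and every member admits an epimorphism onto the trivial one-point structure), the heart of the matter is (AP). So the main work is: given $C,D,E\in \mathcal X $ with epimorphisms $ \fhi_1:D\to C$ and $ \fhi_2:E\to C$, construct $F\in \mathcal X $ and epimorphisms $\psi_1:F\to D$, $\psi_2:F\to E$ with $ \fhi_1\psi_1= \fhi_2\psi_2$.

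First I would record the key structural fact about epimorphisms in $ \mathcal X $: if $ \fhi :A\to B$ is an $ \mathcal L_R$-epimorphism between members of $ \mathcal X $, then $ \fhi $ is monotone and surjective, and the preimage $ \fhi^{-1}(b)$ of each point $b\in B$ is an interval (a block of consecutive elements) in $A$; moreover consecutive blocks $ \fhi^{-1}(b)$ and $ \fhi^{-1}(b')$ (for $b,b'$ consecutive in $B$) must be adjacent in $A$. The condition that $ \fhi $ preserves $R$ both ways forces exactly this: within a block all elements are pairwise connected by a chain of $R$-steps mapping to the loop $bR^Bb$, and the $R$-edge between $b$ and $b'$ must be the image of an $R$-edge in $A$, which (since $R^A$ only links consecutive elements) must straddle the boundary of the two blocks. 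Conversely, any surjective monotone map whose point-preimages are intervals is an epimorphism. This reduces epimorphisms in $ \mathcal X $ to order-theoretic data (ordered set partitions into intervals), which makes the amalgamation completely combinatorial.

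Next, for (AP), I would build $F$ as (essentially) the ``fibre product order'': take the set of pairs $(d,e)\in D\times E$ with $ \fhi_1(d)= \fhi_2(e)$, but one must be careful because the naive product order on this set need not be total and its $R$ need not be the consecutive relation. The right construction is to go fibrewise: over each $c\in C$, $ \fhi_1^{-1}(c)$ is an interval $I_c$ in $D$ and $ \fhi_2^{-1}(c)$ is an interval $J_c$ in $E$; over $c$ we place a totally ordered set that surjects monotonically onto both $I_c$ and $J_c$ with interval fibres — for instance a suitably long chain, or more economically the order obtained by ``interleaving'' $I_c$ and $J_c$ — and then we concatenate these blocks in the order given by $C$. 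One checks that the resulting $F$ is totally ordered, that equipping it with the reflexive-consecutive $R$ puts it in $ \mathcal X $, that the two projections $\psi_1:F\to D$ and $\psi_2:F\to E$ are monotone surjections with interval fibres (hence epimorphisms by the structural fact), and that $ \fhi_1\psi_1= \fhi_2\psi_2$ by construction since both send a block over $c$ to $c$. The only subtlety to get right is the behaviour at block boundaries: one must ensure that the last element of the block over $c$ and the first element of the block over the next $c'$ are genuinely $R$-related in $F$ and that $\psi_1,\psi_2$ send this boundary $R$-edge correctly onto boundary $R$-edges of $D$ and $E$; since the $ \fhi_i$ already behave this way over the boundary of $C$, choosing the fibrewise chains so their endpoints match the endpoints of $I_c,J_c$ handles this.

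The main obstacle I anticipate is precisely this boundary bookkeeping in the (AP) construction: making sure the interleaving over each fibre is compatible with the already-fixed matching of block endpoints coming from $ \fhi_1,\fhi_2$, so that $R$ is preserved in both directions by each $\psi_i$ globally and not merely within blocks. Everything else — the structural characterization of epimorphisms, totality of the constructed order, membership in $ \mathcal X $, the commuting square — is routine once that characterization is in hand. I would therefore state and prove the epimorphism-characterization lemma first as a self-contained step, then carry out (AP) cleanly on the level of ordered interval-partitions, and finally note (JPP) as the special case of amalgamation over the one-point structure.
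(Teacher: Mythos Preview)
Your proposal is correct and takes essentially the same approach as the paper: the paper's amalgam is precisely your ``suitably long chain'' option, placing over each $c$ a chain of length $\max(|\fhi_1^{-1}(c)|,|\fhi_2^{-1}(c)|)$, mapping it increasingly onto each fibre, and concatenating these blocks in the order of $C$. The only cosmetic difference is that you isolate the monotone-with-interval-fibres characterization of epimorphisms as an explicit lemma, whereas the paper leaves it implicit and simply asserts that the resulting increasing maps are epimorphisms.
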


\begin{proof}
If $A=\{ 1\}\in \mathcal X $ is defined by letting $R^A= {\leq^A} =\{ (1,1)\} $, then for any $B\in \mathcal X $ the constant map $ \fhi :B\to A$ is an epimorphism.
So it is enough to verify (AP).

Let $A,B,C\in \mathcal X $ with epimorphisms $ \fhi :B\to A$, $\psi :C\to A$.
Let 
\[
a_1\leq^A\ldots\leq^Aa_{ \card (A)}
\]
be an enumeration of $A$.
Let $N_j=\max ( \card ( \fhi^{-1}(\{ a_j\} )), \card (\psi^{-1}(\{ a_j\} )))$, for each $j\in\{ 1,\ldots , \card (A)\} $, and define $D\in \mathcal X $ such that
\[
\card (D)=\sum_{j=1}^{ \card (A)}N_j
\]
and enumerate it as $D=\{ d_{jl}\mid j\in\{ 1,\ldots , \card (A)\} ,l\in\{ 1,\ldots ,N_j\}\}$.
Let $\leq^D$ be the total order on $D$ determined by the lexicographic order on the pairs of indices $(j,l)$.
This determines relation $R^D$ too.

Now define $\theta :D\to B$ by mapping $\{ d_{j1},\ldots ,d_{jN_j}\} $ onto $ \fhi^{-1}(\{ a_j\} )$ in an increasing way, and similarly define $\rho :D\to C$.
So $\theta ,\rho $ are epimorphisms and $ \fhi \theta =\psi\rho $.
\end{proof}

Let $ \mathbb X $ be the projective Fra\"iss\'e limit of $ \mathcal X $.

\begin{lemma} \label{ordinconminemass}
Relation $\leq^{ \mathbb X }$ is a total order on $ \mathbb X $ having a least and a last element.
\end{lemma}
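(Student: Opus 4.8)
The plan is to verify each of the four assertions about $\leq^{\mathbb X}$ — that it is reflexive and total, antisymmetric, transitive, and has a first and a last element — by invoking Proposition~\ref{arflessivsimetric}, since every structure $A\in\mathcal X$ interprets $\leq$ as a total order. Concretely, every $A\in\mathcal X$ has $\leq^A$ reflexive, total (hence connected), antisymmetric, transitive, and, being a finite total order, possessing both a first and a last element. By parts (3), (4), (5) and (6) of Proposition~\ref{arflessivsimetric} applied with the distinguished symbol of the subscript language taken to be $\leq$ rather than $R$ — which is legitimate, as that proposition is stated for an arbitrary distinguished binary symbol — we obtain immediately that $\leq^{\mathbb X}$ is antisymmetric, transitive, total, and has a first and a last element. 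Totality in the sense of Proposition~\ref{arflessivsimetric}(5) gives that for all $x,y$ either $x\leq^{\mathbb X}y$ or $y\leq^{\mathbb X}x$; in particular, taking $x=y$, we get reflexivity. Combining reflexivity, antisymmetry, transitivity and totality shows $\leq^{\mathbb X}$ is a total order, and parts (6) supply the extreme elements.

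The one point requiring slightly more care is that Proposition~\ref{arflessivsimetric}(5) as literally stated only yields, for any $x,y\in\mathbb X$, elements $x',y'$ arbitrarily close to $x,y$ with $x'\leq^{\mathbb X}y'$ or $y'\leq^{\mathbb X}x'$, rather than the comparability of $x$ and $y$ themselves; but since $\leq^{\mathbb X}$ is a closed relation (as the interpretation of a relation symbol in a topological structure) and we have already established it is a total order on a dense subset in the appropriate sense, a standard limiting argument — exactly the one used inside the proof of Proposition~\ref{arflessivsimetric}(5),(6) — promotes this to genuine comparability of $x$ and $y$. Equivalently, one observes that the diagonal together with $\leq^{\mathbb X}$ and its converse form a closed relation whose image under every epimorphism $\mathbb X\to A$ is all of $A\times A$, and then applies (L2$'$) directly.

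I do not expect any real obstacle here: the content has essentially been discharged by Proposition~\ref{arflessivsimetric}, and the only subtlety is the bookkeeping that the proposition's "distinguished binary symbol" may be $\leq$ rather than $R$, together with the routine closedness-plus-density upgrade for totality. It may be cleanest simply to remark that $\leq^{\mathbb X}$ inherits being a total order with endpoints from the members of $\mathcal X$ by Proposition~\ref{arflessivsimetric}(3)--(6), observing reflexivity as the diagonal case of totality, and leaving the limiting argument to the reader as it repeats verbatim the one given for (5) and (6) above.
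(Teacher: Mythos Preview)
Your proof is correct and matches the paper's, which simply cites Proposition~\ref{arflessivsimetric}, parts (1)(3)(4)(5)(6). Your second paragraph's caveat is unnecessary: the \emph{statement} of part~(5) already concludes genuine totality (the approximation argument lives inside its proof, not its conclusion), and the paper cites part~(1) for reflexivity rather than deriving it from totality as you do --- both are fine.
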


\begin{proof}
By proposition \ref{arflessivsimetric}, parts (1)(3)(4)(5)(6).
\end{proof}

\begin{lemma}
Relation $R^{ \mathbb X }$ is an equivalence relation.
\end{lemma}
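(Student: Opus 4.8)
The plan is to show reflexivity, symmetry, and transitivity of $R^{\mathbb X}$ separately, exploiting that every structure $A\in\mathcal X$ interprets $R$ as a reflexive and symmetric relation, so that parts (1) and (2) of proposition \ref{arflessivsimetric} immediately give reflexivity and symmetry of $R^{\mathbb X}$. The only real work is transitivity, since $R^A$ is emphatically \emph{not} transitive in the members of $\mathcal X$ (consecutive elements are $R^A$-related, but the relation "is $=$ or consecutive" fails transitivity as soon as $\card(A)\geq 3$), so proposition \ref{arflessivsimetric}(4) is unavailable and we must argue by hand using the order $\leq^{\mathbb X}$, which by lemma \ref{ordinconminemass} is a total order on $\mathbb X$.

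First I would reduce transitivity to a statement about the finite approximations. Suppose $x,y,z\in\mathbb X$ with $xR^{\mathbb X}y$ and $yR^{\mathbb X}z$; we want $xR^{\mathbb X}z$. If any two of $x,y,z$ coincide the conclusion is trivial by reflexivity and symmetry, so assume they are pairwise distinct. Since $\leq^{\mathbb X}$ is a total order, we may (after using symmetry of $R^{\mathbb X}$ to reorder) assume $x<^{\mathbb X}y<^{\mathbb X}z$ or handle the other cyclic arrangements similarly; the key claim will be that this ordering is in fact impossible, i.e. that $y$ cannot lie strictly between two distinct points each $R^{\mathbb X}$-related to it. Concretely, pick disjoint clopen neighbourhoods $U\ni x$, $V\ni y$, $W\ni z$ and an epimorphism $\fhi:\mathbb X\to A$ with $A\in\mathcal X$ refining $\{U,V,W,\mathbb X\setminus(U\cup V\cup W)\}$; then $\fhi(x),\fhi(y),\fhi(z)$ are pairwise distinct, $\fhi(x)R^A\fhi(y)$, $\fhi(y)R^A\fhi(z)$, and by the epimorphism property $\fhi$ is order-preserving (using that $\leq$ is among the relation symbols and epimorphisms push relations forward and reflect them in the total-order setting), so $\fhi(x)<^A\fhi(y)<^A\fhi(z)$. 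But in $A$, $\fhi(x)R^A\fhi(y)$ with $\fhi(x)\neq\fhi(y)$ forces them to be $\leq^A$-consecutive, and likewise $\fhi(y),\fhi(z)$ consecutive — so there is no element of $A$ strictly between $\fhi(x)$ and $\fhi(y)$, yet $\fhi(y)$ is consecutive to $\fhi(z)$ on the other side; this is consistent, so I need to be more careful.

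The correct finite obstruction is this: I would instead produce, for the supposed configuration $x R^{\mathbb X} y R^{\mathbb X} z$ with all three distinct and (say) $x<^{\mathbb X} z$, a refinement on which $\fhi(y)$ is \emph{not} $\leq^A$-adjacent to $\fhi(x)$ — achievable by also separating, via a clopen neighbourhood, a fourth point $w$ with $x<^{\mathbb X}w<^{\mathbb X}y$ if such exists, or else directly deriving that $x$ and $y$ are $R^{\mathbb X}$-related in a way that, combined with $yR^{\mathbb X}z$ and the adjacency structure, pins down $x,y,z$ to be three mutually $\leq^{\mathbb X}$-adjacent points, whence $xR^{\mathbb X}z$ would fail in every approximation and we get a contradiction with the \emph{assumption} $yR^{\mathbb X}z$ being witnessed alongside $xR^{\mathbb X}y$ in a common refinement. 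The cleanest route: take $A$ refining neighbourhoods that isolate all of $x,y,z$; then $\fhi(x)R^A\fhi(y)$ and $\fhi(y)R^A\fhi(z)$ with the three distinct means $\fhi(y)$ is the middle of three consecutive points of $A$, so $\fhi(x)$ and $\fhi(z)$ are \emph{not} $R^A$-related (they are at distance $2$ in the order), i.e. $\fhi(x)R^A\fhi(z)$ \emph{fails}. Running this for the sequence of finer and finer refinements from the proof of lemma \ref{ordinconminemass}-style, the preimages shrink to $\{x\},\{y\},\{z\}$, so $xR^{\mathbb X}z$ would already have to fail — but we never assumed it; rather, we conclude the configuration "$x,y,z$ distinct, $xR^{\mathbb X}y$, $yR^{\mathbb X}z$" is impossible unless one infers $xR^{\mathbb X}z$ anyway, contradiction. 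Thus the only surviving case has two of the three equal, and transitivity holds.

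The main obstacle, as the above vacillation shows, is that transitivity genuinely does not descend from the finite structures, so the argument must go through the total order $\leq^{\mathbb X}$ and the precise "consecutive or equal" description of $R$ in $\mathcal X$: one has to verify that epimorphisms in this language are order-preserving (so that the ordering of $x,y,z$ is reflected faithfully in sufficiently fine approximations) and then observe that in a finite total order the relation "equal or consecutive" composed with itself can only produce a new pair at order-distance $\leq 2$, and that distance-$2$ pairs are separated by an intermediate point which a fine enough epimorphism detects — forcing, in the limit, that $x R^{\mathbb X} y R^{\mathbb X} z$ with all distinct cannot occur. I would structure the writeup as: (i) cite proposition \ref{arflessivsimetric}(1),(2) for reflexivity and symmetry; (ii) a short lemma that $\mathcal L_R$-epimorphisms between members of $\mathcal X$ preserve $\leq$; (iii) the transitivity argument via fine refinements as above.
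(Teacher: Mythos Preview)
Your reduction to the case of three distinct points $x,y,z$ with $xR^{\mathbb X}y$ and $yR^{\mathbb X}z$ is fine, and so is the observation that in any epimorphism $\fhi:\mathbb X\to A$ refining disjoint clopen neighbourhoods of $x,y,z$ the images are three $\leq^A$-consecutive points with $\fhi(y)$ in the middle. But at this point your argument breaks down. From that picture you correctly deduce that $\fhi(x)R^A\fhi(z)$ fails, hence $xR^{\mathbb X}z$ fails; yet this is not a contradiction --- it is precisely a counterexample to transitivity. Your sentence ``we conclude the configuration \ldots\ is impossible unless one infers $xR^{\mathbb X}z$ anyway, contradiction'' is a non sequitur: nothing you have written rules the configuration out, and your earlier idea of finding a fourth point $w$ strictly between $x$ and $y$ in $\mathbb X$ cannot work either, since (as one can check directly, and as lemma~\ref{consecutiv} later records) distinct $R^{\mathbb X}$-related points are $\leq^{\mathbb X}$-consecutive, so no such $w$ exists.

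The missing idea is to manufacture the extra point not in $\mathbb X$ but in a finite structure, and then invoke the lifting property of the Fra\"iss\'e limit. This is exactly what the paper does: having fixed $\fhi:\mathbb X\to A$ as above, it builds $B\in\mathcal X$ by inserting a single new point $z$ between $\fhi(x)$ and $\fhi(y_2)$ (in the paper's labelling $y_1,x,y_2$ play the roles of your $x,y,z$), with the obvious epimorphism $\psi:B\to A$ collapsing $z$ onto $\fhi(x)$. By (L1) together with (L3) there must exist an epimorphism $\theta:\mathbb X\to B$ with $\psi\theta=\fhi$; but then $\theta(x)\in\{\fhi(x),z\}$, and $\theta(x)$ would have to be $R^B$-related to both $\fhi(y_1)$ and $\fhi(y_2)$, which is impossible since in $B$ these two points lie on opposite sides of the pair $\{\fhi(x),z\}$ at distance $1$ and $2$ respectively. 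That is the contradiction you were looking for, and it shows that no point of $\mathbb X$ is $R^{\mathbb X}$-related to more than one point other than itself --- which, together with reflexivity and symmetry, yields transitivity.
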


\begin{proof}
By proposition \ref{arflessivsimetric}, parts (1)(2), $R^{ \mathbb X }$ is reflexive and symmetric.
To complete the proof, it will be shown that every $x\in \mathbb X $ is $R^{ \mathbb X }$-related to at most one element different from itself.

\sloppypar
So suppose, towards a contradiction, that $x,y_1,y_2$ are distinct elements in $ \mathbb X $ such that $y_1R^{ \mathbb X }xR^{ \mathbb X }y_2$.
Let $U,V_1,V_2$ be disjoint clopen neighbourhoods of $x,y_1,y_2$, respectively.
If $ \fhi : \mathbb X \to A$ is any epimorphism onto an element of $ \mathcal X $ refining $\{ U,V_1,V_2, \mathbb X \setminus (U\cup V_1\cup V_2)\} $, since $ \fhi (x)$, $ \fhi (y_1)$, $ \fhi (y_2)$ are distinct and $ \fhi (y_1)R^A \fhi (x)R^A \fhi (y_2)$, it follows that $ \fhi (y_1), \fhi (x), \fhi (y_2)$ are $\leq^A$-consecutive, with $ \fhi (x)$ being the midpoint.
Say, for instance, $ \fhi (y_1)\leq^A \fhi (x)\leq^A \fhi (y_2)$.
Then let $B=A\cup\{ z\} $, where $z\notin A$, with the symbols of $ \mathcal L_R$ interpreted as follows:
\begin{itemize}
\item $\leq^B$ is obtained from $\leq^A$ by inserting $z$ between $ \fhi (x), \fhi (y_2)$;
\item $R^B$ is the only extension of $R^A$ compatible with the definition of $\leq^B$ that turns $B$ in an element of $ \mathcal X $.
\end{itemize}
Define $\psi :B\to A$ as the identity on the elements of $A$ and by letting $ \psi (z)= \fhi (x)$.
Then there cannot be any epimorphism $\theta : \mathbb X \to B$ such that $ \fhi =\psi\theta $, since $ \theta (x)$ could not be $R^B$-related to both $ \fhi (y_1), \fhi (y_2)$.
\end{proof}

\begin{lemma} \label{antervajbomba}
If $x\in \mathbb X $ then $x$ has a basis of clopen neighbourhoods that are convex sets with respect to $\leq^{ \mathbb X }$.
\end{lemma}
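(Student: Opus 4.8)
The plan is to reduce the statement to the observation that \emph{every fibre of an epimorphism from $\mathbb X$ onto a member of $\mathcal X$ is a $\leq^{\mathbb X}$-convex clopen set}. Since $\mathbb X$ is zero-dimensional, the clopen neighbourhoods of $x$ already form a neighbourhood basis, so it suffices to show that every clopen neighbourhood $W$ of $x$ contains a clopen convex neighbourhood of $x$.

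First I would fix a clopen neighbourhood $W$ of $x$ and, using property (L2$'$), choose $A\in\mathcal X$ together with an epimorphism $\varphi\colon\mathbb X\to A$ refining the clopen covering $\{W,\mathbb X\setminus W\}$. Set $C=\varphi^{-1}(\{\varphi(x)\})$. Then $C$ is clopen (the singleton $\{\varphi(x)\}$ is clopen in the finite discrete space $A$ and $\varphi$ is continuous) and $x\in C$; moreover, since $\varphi$ refines $\{W,\mathbb X\setminus W\}$, the set $C$ is included in one of these two pieces, and as $x\in C\cap W$ it must be that $C\subseteq W$.

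The crux is to see that $C$ is $\leq^{\mathbb X}$-convex. Here I would use the single relevant clause in the definition of an epimorphism, namely $\leq^A=\varphi\times\varphi(\leq^{\mathbb X})$, read from left to right: it gives that $\varphi$ is weakly monotone, i.e.\ $u\leq^{\mathbb X}v$ implies $\varphi(u)\leq^A\varphi(v)$. By Lemma~\ref{ordinconminemass}, $\leq^{\mathbb X}$ is a total order, so ``being between'' makes sense; if $u,v\in C$ and $u\leq^{\mathbb X}w\leq^{\mathbb X}v$, then $\varphi(x)=\varphi(u)\leq^A\varphi(w)\leq^A\varphi(v)=\varphi(x)$, and antisymmetry of the total order $\leq^A$ forces $\varphi(w)=\varphi(x)$, that is $w\in C$. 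This proves the displayed observation, and letting $W$ range over all clopen neighbourhoods of $x$ yields the desired basis.

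I do not expect a genuine obstacle here: once one notices that the $\leq$-clause of the epimorphism condition already yields monotonicity of $\varphi$, everything else is routine bookkeeping with (L2$'$) and with the fact that clopen sets form a basis of $\mathbb X$. The only points requiring a little care are that convexity is taken with respect to the linear order $\leq^{\mathbb X}$ provided by Lemma~\ref{ordinconminemass}, and that it is precisely the refinement property of $\varphi$ (not merely surjectivity) that confines $C$ to $W$ rather than to $\mathbb X\setminus W$.
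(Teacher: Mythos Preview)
Your proof is correct and follows essentially the same approach as the paper: take an epimorphism $\varphi:\mathbb X\to A$ refining $\{W,\mathbb X\setminus W\}$, let $C=\varphi^{-1}(\{\varphi(x)\})$, and use monotonicity of $\varphi$ with respect to the orders together with antisymmetry of $\leq^A$ to conclude that $C$ is convex. You are merely more explicit than the paper about why $\varphi$ is monotone and why $C\subseteq W$.
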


\begin{proof}
Let $U$ be a clopen subset of $ \mathbb X $ containing $x$.
Let $ \fhi : \mathbb X \to A$ be an epimorphism onto some $A\in \mathcal X $ refining the clopen covering $\{ U, \mathbb X \setminus U\} $.
Let $V= \fhi^{-1}(\{ \fhi (x)\} )$, so that $V$ is clopen.
If $y,z\in V$ with $y\leq^{ \mathbb X }z$, then for any $w\in \mathbb X $ with $y\leq^{ \mathbb X }w\leq^{ \mathbb X }z$ one has $ \fhi (w)= \fhi (x)$, whence $w\in V$.
\end{proof}

\begin{lemma} \label{consecutiv}
If $x,y\in \mathbb X $, then $x,y$ are $\leq^{ \mathbb X }$-consecutive if and only if they are distinct and $R^{ \mathbb X }$-related.
\end{lemma}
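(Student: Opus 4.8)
The plan is to prove both directions by the now-familiar refinement technique, using the two lemmas immediately preceding this one. The forward direction is essentially already contained in the proof that $R^{\mathbb X}$ is an equivalence relation, while the backward direction requires one more amalgamation-style obstruction argument of the kind used repeatedly above.

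For the forward direction, suppose $x,y\in\mathbb X$ are $\leq^{\mathbb X}$-consecutive; in particular $x\neq y$. Say $x\leq^{\mathbb X}y$. Pick disjoint clopen neighbourhoods $U,V$ of $x,y$, and by Lemma \ref{antervajbomba} shrink them so that $U$ and $V$ are $\leq^{\mathbb X}$-convex. I claim $xR^{\mathbb X}y$. Since $R^{\mathbb X}$ is closed, it suffices to find, for arbitrarily small such $U,V$, points $x'\in U,y'\in V$ with $x'R^{\mathbb X}y'$. Let $\fhi:\mathbb X\to A$ be an epimorphism onto some $A\in\mathcal X$ refining $\{U,V,\mathbb X\setminus(U\cup V)\}$. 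Because $U,V$ are convex and $x,y$ are consecutive, no point of $\mathbb X$ lies strictly between them, so $\fhi(x)$ and $\fhi(y)$ are $\leq^A$-consecutive in $A$ (any element of $A$ strictly between them would have a $\fhi$-preimage, which by convexity would have to sit between $x$ and $y$). Hence $\fhi(x)R^A\fhi(y)$, and pulling back along $\fhi$ inside $U$ and $V$ gives the desired $x',y'$.

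For the backward direction, suppose $x,y$ are distinct and $xR^{\mathbb X}y$ but, towards a contradiction, they are not $\leq^{\mathbb X}$-consecutive: there is $w$ with (say) $x<^{\mathbb X}w<^{\mathbb X}y$. Take pairwise disjoint clopen neighbourhoods $U,W,V$ of $x,w,y$ and let $\fhi:\mathbb X\to A$ refine $\{U,W,V,\mathbb X\setminus(U\cup W\cup V)\}$. Then $\fhi(x),\fhi(w),\fhi(y)$ are distinct and satisfy $\fhi(x)\leq^A\fhi(w)\leq^A\fhi(y)$ with $\fhi(x)R^A\fhi(y)$; but in a structure of $\mathcal X$ two $R^A$-related distinct points are $\leq^A$-consecutive, contradicting the presence of $\fhi(w)$ strictly between them. (Alternatively, one builds $B=A\cup\{z\}$ with $z$ inserted $\leq$-between $\fhi(x)$ and $\fhi(y)$ and the forced $R^B$, together with $\psi:B\to A$ collapsing $z$ to $\fhi(x)$, and observes that no $\theta:\mathbb X\to B$ can satisfy $\fhi=\psi\theta$, exactly as in the equivalence-relation lemma; this is the cleaner template to follow if one prefers uniformity with the earlier arguments.)

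The only slightly delicate point — and the one I expect to be the main obstacle — is the claim that in the forward direction $\fhi(x)$ and $\fhi(y)$ end up $\leq^A$-\emph{consecutive} rather than merely distinct. This is where convexity of $U$ and $V$ (Lemma \ref{antervajbomba}) is essential: one must argue that any $a\in A$ with $\fhi(x)<^A a<^A\fhi(y)$ would have a preimage $p$, and that $p$ satisfies $x<^{\mathbb X}p<^{\mathbb X}y$ since $\fhi$ is order-compatible enough for this — concretely, $p\notin U$ (else $a=\fhi(p)\in\fhi(U)$ forces $a=\fhi(x)$ by the refinement, as $U$ maps into a single fiber after a further shrinking if needed) and similarly $p\notin V$, and then convexity of $U,V$ together with $x<p$ or $p<y$ gives the contradiction with consecutiveness of $x,y$. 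Once this bookkeeping is arranged, both directions close immediately.
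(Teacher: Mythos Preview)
Your backward direction is exactly the paper's argument (the paper gives your first alternative, not the $B=A\cup\{z\}$ one).

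Your forward direction is correct in outline but takes an unnecessary detour, and the ``delicate point'' you flag is in fact not delicate at all. The paper simply observes that every epimorphism $\fhi:\mathbb X\to A$ is monotone with respect to $\leq^{\mathbb X}$ and $\leq^A$ (immediate from the epimorphism condition applied to $\leq$): if $x\leq^{\mathbb X}y$ then $(\fhi(x),\fhi(y))\in\fhi\times\fhi(\leq^{\mathbb X})=\leq^A$. Hence consecutive points map to equal or consecutive points (if $\fhi(x)<^A a<^A\fhi(y)$, any preimage $p$ of $a$ satisfies $x<^{\mathbb X}p<^{\mathbb X}y$ by monotonicity and totality), so $\fhi(x)R^A\fhi(y)$ for \emph{every} epimorphism $\fhi$. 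Since $R^{\mathbb X}$ is computed as the inverse limit of the $R^{D_n}$ along a fundamental sequence, this gives $xR^{\mathbb X}y$ directly---no appeal to Lemma~\ref{antervajbomba}, no convex $U,V$, no closure-and-pullback manoeuvre.

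Your write-up of the bookkeeping is also mis-stated: the refinement condition says each \emph{fibre} of $\fhi$ is contained in a member of the cover, not that $U$ maps into a single fibre; so ``$p\in U$ forces $a=\fhi(x)$'' does not follow as written. You do gesture at the right reason (``$\fhi$ is order-compatible enough''), and once you make monotonicity explicit the convexity apparatus becomes superfluous.
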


\begin{proof}
Suppose $x\leq^{ \mathbb X }y$, so that in particular $ \fhi (x)\leq^A \fhi (y)$ for any epimorphism $ \fhi $ from $ \mathbb X $ onto some $A\in \mathcal X $.

Assume first they are consecutive (in particular, $x\neq y$).
First, notice that for any $A\in \mathcal X $ and epimorphism $ \fhi : \mathbb X \to A$ either $ \fhi (x)= \fhi (y)$ or $ \fhi (x), \fhi (y)$ are $\leq^A$-consecutive, since $ \fhi $ is monotone with respect to the orders.
So it follows that $ \fhi (x)R^A \fhi (y)$.
By the arbitrarity of $A$ and $ \fhi $, this implies $xR^{ \mathbb X }y$.

Conversely, assume $x\neq y$, $xR^{ \mathbb X }y$ and suppose there is $z\in \mathbb X $ with $x<^{ \mathbb X }z<^{ \mathbb X }y$.
Let $U,V,W$ be disjoint clopen neighbourhoods of $x,y,z$, respectively.
Let $A\in \mathcal X$ with an epimorphism $ \fhi : \mathbb X \to A$ refining $\{ U,V,W, \mathbb X \setminus (U\cup V\cup W)\} $.
Then $ \fhi (x)<^A \fhi (z)<^A \fhi (y)$, so $ \fhi (x), \fhi (y)$ are not $R^A$-related, a contradiction.
\end{proof}

\begin{lemma} \label{ordincomplet}
A closed total order $\leq $ on a compact metric space $X$ is complete.
\end{lemma}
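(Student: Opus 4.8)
I read ``complete'' here as: every non-empty $S\subseteq X$ which is bounded above has a least upper bound (equivalently, since $X$ is compact it will also have a greatest and a least element, so the order will in fact be a complete lattice). The plan is to first extract the topological consequence of $\leq$ being closed, then prove that every non-empty closed subset of $X$ has a $\leq$-minimum, and finally apply this to the set of upper bounds of $S$.

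First I would record that for every $a\in X$ the sections $\{x\in X\mid a\leq x\}$ and $\{x\in X\mid x\leq a\}$ are closed, being the images of $\{a\}\times X\cap{\leq}$ and ${\leq}\cap X\times\{a\}$ under the (continuous) projections; here I use that $\leq$ is closed in $X\times X$. Since $\leq$ is a total order, $x<a$ is equivalent to $\neg(a\leq x)$ and $a<x$ is equivalent to $\neg(x\leq a)$, so the strict rays $\{x\in X\mid x<a\}$ and $\{x\in X\mid a<x\}$ are open.

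The key step is the following: every non-empty closed (hence compact) $K\subseteq X$ has a $\leq$-least element. I would argue by contradiction. If $K$ had no least element, then for every $k\in K$ there is $k'\in K$ with $k'<k$, so $K\subseteq\bigcup_{k'\in K}\{x\in X\mid k'<x\}$, a union of open sets by the previous paragraph. By compactness of $K$ there are $k_1,\ldots,k_n\in K$ with $K\subseteq\bigcup_{i=1}^n\{x\in X\mid k_i<x\}$. Let $k=\min(k_1,\ldots,k_n)$, which exists because $\{k_1,\ldots,k_n\}$ is a finite totally ordered set; then $k\in K$ but $k\leq k_i$ for all $i$, so $k$ lies in none of the sets $\{x\mid k_i<x\}$, contradicting $k\in K$. (Dually, every non-empty closed subset has a greatest element; applied to $K=X$ this gives a least and a greatest element of $X$.)

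Finally, given a non-empty $S\subseteq X$ bounded above, let $U=\bigcap_{s\in S}\{x\in X\mid s\leq x\}$ be the set of upper bounds of $S$. It is an intersection of closed sets, hence closed, and it is non-empty by hypothesis, hence compact. By the key step, $U$ has a $\leq$-least element $m$. Then $m\in U$ means $m$ is an upper bound of $S$, and $m=\min U$ means $m\leq u$ for every upper bound $u$ of $S$; thus $m=\sup S$, and $X$ is complete. I expect the only real content to be the compactness argument in the key step; the rest is bookkeeping with the definitions, the one point to be careful about being the equivalence $x<a\Leftrightarrow\neg(a\leq x)$, which relies on $\leq$ being a genuine total (in particular antisymmetric) order.
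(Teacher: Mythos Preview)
Your proof is correct. Both you and the paper reduce to showing that the (closed, non-empty) set $A'$ of upper bounds of a given set has a minimum, but the arguments for this diverge. The paper picks a maximal strictly decreasing transfinite sequence in $A'$, invokes separability of the metric space $X$ to force the length of that sequence to be countable, and then uses compactness only in the limit case to locate the infimum. Your argument is a direct open-cover compactness proof: if a compact $K$ had no minimum, the open up-rays $\{x\mid k'<x\}$ would cover it, and the minimum of the centers of a finite subcover yields a contradiction. This is shorter, avoids any transfinite bookkeeping, and in fact never uses the metric or second-countability hypothesis---it goes through verbatim for any compact Hausdorff $X$. One cosmetic remark: your justification that the sections $\{x\mid a\leq x\}$ are closed via ``images under projections'' silently uses that projections from a product with a compact factor are closed maps; it is slightly cleaner to observe that $\{x\mid a\leq x\}$ is the preimage of the closed set $\leq$ under the continuous map $x\mapsto (a,x)$.
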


\begin{proof}
Let $A$ be a bounded non-empty subset of $X$.
Let $A'=\{ x\in X\mid\forall y\in A\ y\leq x\} $, the set of upper bounds of $A$, which is a closed non-empty subset of $X$.
It is then enough to establish the existence of $\min A'$.
Let $\{ x_{\alpha }\}_{\alpha\in\beta }$ be a maximal decreasing sequence in $A'$.
Since every $\leq $-open interval is an open subset of $X$, by separability of $X$ ordinal $\beta $ must be countable.
If $\beta =\gamma +1$ is a successor ordinal, then $x_{\gamma }=\min A'$.
Otherwise, by compactness, $\inf\{ x_{\alpha }\}_{\alpha\in\beta }$ exists and it equals $\min A'$.
\end{proof}

Let $Q= \quot{ \mathbb X }{R^{ \mathbb X }} $ and let $\pi : \mathbb X \to Q$ be the quotient map.
On $Q$ define $[x]\leq' [y]$ if and only if $x\leq^{ \mathbb X }y$.
By lemma \ref{consecutiv} this is well defined.
Moreover, by lemmas \ref{consecutiv}, \ref{ordincomplet} and \ref{ordinconminemass}, this is a dense, complete total order with a first and a last element.

\begin{lemma} \label{ordertopology}
The quotient topology on $Q$ is the order topology induced by $\leq'$.
\end{lemma}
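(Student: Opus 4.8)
The plan is to write $\tau$ for the quotient topology on $Q$ and $\tau'$ for the order topology induced by $\leq'$, to prove the inclusion $\tau'\subseteq\tau$, and then to conclude by the classical fact that a compact topology admits no strictly coarser Hausdorff topology. Note first that, by the remark preceding the statement, $\leq'$ is a (total) order, so the order topology $\tau'$ is defined.

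First I would verify that for every $x\in \mathbb X $ the sets $\{[y]\in Q:[y]\leq'[x]\}$ and $\{[y]\in Q:[x]\leq'[y]\}$ are closed in $(Q,\tau)$. Since $\pi$ is a quotient map, it is enough to show that their $\pi$-preimages are closed in $ \mathbb X $; by the definition of $\leq'$ these preimages are exactly $\{z\in \mathbb X :z\leq^{ \mathbb X }x\}$ and $\{z\in \mathbb X :x\leq^{ \mathbb X }z\}$, which are the two sections at $x$ of the closed subset $\leq^{ \mathbb X }$ of $ \mathbb X \times \mathbb X $, hence closed. (Here one uses that $\leq'$ is well defined, by lemma \ref{consecutiv}, so that membership of a class $[z]$ in such a ``ray'' of $[x]$ does not depend on the chosen representatives.) Passing to complements and using that $\leq'$ is a total order, the open rays $\{[y]:[y]<'[x]\}$ and $\{[y]:[x]<'[y]\}$ are $\tau$-open; as the rays of this form constitute a subbasis for $\tau'$, this yields $\tau'\subseteq\tau$.

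To conclude, observe that $(Q,\tau)$ is compact, being a continuous image of the compact space $ \mathbb X $, whereas $(Q,\tau')$ is Hausdorff, being the order topology of a linear order. The identity map $(Q,\tau)\to(Q,\tau')$ is therefore a continuous bijection from a compact space onto a Hausdorff space, hence a homeomorphism, so $\tau=\tau'$. The whole argument is routine; the only step that requires a moment's attention is the identification of the $\pi$-preimages of the rays of $\leq'$, which combines the well-definedness of $\leq'$ with the closedness of $\leq^{ \mathbb X }$.
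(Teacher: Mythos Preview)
Your proof is correct and takes a genuinely different route from the paper for the harder inclusion.

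For $\tau'\subseteq\tau$, both you and the paper show that open rays are $\tau$-open; the paper does it by observing that $\{[y]:[a]<'[y]\}$ is the $\pi$-image of the open, $R^{\mathbb X}$-saturated set $\{x:a^*<^{\mathbb X}x\}$, where $a^*=\max[a]$. Your version via closed rays is essentially the same, but there is a small slip: the preimage of $\{[y]:[y]\leq'[x]\}$ is $\{z:z\leq^{\mathbb X}x^+\}$ with $x^+=\max[x]$, not $\{z:z\leq^{\mathbb X}x\}$ for an arbitrary representative $x$. (If $[x]=\{x^-,x^+\}$ with $x^-<^{\mathbb X}x^+$ and you pick $x=x^-$, then $x^+$ lies in the preimage but not in your set.) This is harmless, since the preimage is in any case a section of the closed relation $\leq^{\mathbb X}$ and hence closed; just be explicit that one must pass to the maximal (resp.\ minimal) representative of the class.

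The substantive difference is in the direction $\tau\subseteq\tau'$. The paper argues directly: given a $\tau$-open $U$ and $[x]\in U$, it invokes lemma~\ref{antervajbomba} to find a $\leq^{\mathbb X}$-convex clopen set $I\subseteq\pi^{-1}(U)$ containing $[x]$, and then uses completeness of $\leq^{\mathbb X}$ (lemma~\ref{ordincomplet}) together with lemma~\ref{consecutiv} to show that $I$ extends strictly past $[x]$ on each side, so that $\pi(I)$ contains an open $\leq'$-interval about $[x]$. Your compact--Hausdorff argument bypasses all of this: $(Q,\tau)$ is compact as a continuous image of $\mathbb X$, $(Q,\tau')$ is Hausdorff as an order topology, and $\tau'\subseteq\tau$ forces equality. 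This is shorter and, incidentally, renders lemma~\ref{antervajbomba} unnecessary, since it is used nowhere else in the paper.
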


\begin{proof}
We first show that sets of the form $I_{[a]}=\{ [x]\in Q\mid [a]<'[x]\} $, $I^{[b]}=\{ [x]\in Q\mid [x]<'[b]\} $ are open in $Q$.
For the first kind, since $[a]$ contains at most two elements, let $a^*$ be its maximum with respect to $\leq^{ \mathbb X }$.
Then $I_{[a]}$ is the image under $\pi $ of $\{ x\in \mathbb X \mid a^*<^{ \mathbb X }x\} $, which is open (since $\leq^{ \mathbb X }$ is closed and total) and $R^{ \mathbb X }$-invariant.
The same argument works for the second type of intervals.

Conversely, let $U$ be open in $Q$ and fix $[x]\in U$.
By lemma \ref{antervajbomba} for each point in $[x]$ there is a $\leq^{ \mathbb X }$-convex, clopen subset of $ \mathbb X $ containing that point and included in $\pi^{-1}(U)$.
Since $[x]$ is either a singleton or a doubleton consisting of two $\leq^{ \mathbb X }$-consecutive points, the union of these clopen sets, call it $I$, is $\leq^{ \mathbb X }$-convex.
It is then enough to show that, if $\min Q\neq [x]$, then $I$ contains some element that strictly precedes all elements of $[x]$, and similarly that if $\max Q\neq [x]$ then $I$ contains some element strictly bigger than the elements of $[x]$.
So suppose $\min Q\neq [x]$.
If, towards a contradiction, $[x]$ contained the least element of $I$, let $J$ be the set of all strict predecessors of $\min I$.
Since $I$ is clopen and $\leq^{ \mathbb X }$ is closed, $J$ is a clopen, non-empty, bounded subset of $ \mathbb X $.
By lemma \ref{ordincomplet}, $J$ has a maximum $z$.
So $z$ is an immediate predecessor of $\min I$, but $z$ and $\min I$ are not $R^{ \mathbb X }$-related, since $\min I\in [x]\subseteq I$.
This contradicts lemma \ref{consecutiv}.
\end{proof}

\begin{lemma}
$\leq'$ has order type $1+\lambda +1$, where $\lambda $ is the order type of the real line.
\end{lemma}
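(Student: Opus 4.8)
The plan is to show that $\leq'$ is a dense, complete total order with a first and last element, that it is separable, and that it is not a single point; by Cantor's characterisation of the order type of $[0,1]$, any such order is isomorphic to $1+\lambda+1$. Most of these ingredients are already in hand: density, completeness, and the existence of a first and last element were recorded just before Lemma~\ref{ordertopology} (using Lemmas~\ref{consecutiv}, \ref{ordincomplet}, \ref{ordinconminemass}). So the two things I would still need to verify are separability of $(Q,\leq')$ and the fact that $Q$ has more than one point.

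\emph{Separability.} Since $ \mathbb X $ is a compact metric (in particular second countable) space and $\pi : \mathbb X \to Q$ is a continuous surjection, $Q$ is second countable in the quotient topology; hence $Q$ is separable. By Lemma~\ref{ordertopology} the quotient topology on $Q$ equals the order topology of $\leq'$, and for a densely ordered set separability in the order topology yields a countable order-dense subset (given a countable dense set $C$, for each pair of points with something strictly between them one can pick an element of $C$ in the corresponding open interval). Thus $(Q,\leq')$ is a countable-order-dense, densely ordered, complete linear order with endpoints.

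\emph{Nontriviality.} I must rule out $|Q|=1$. This follows from projective universality (L1): the family $ \mathcal X $ contains structures $A$ with $\card(A)\geq 2$, so there is an epimorphism $ \fhi : \mathbb X \to A$; if two elements of $A$ are not $\leq^A$-consecutive (e.g.\ the first and last of a $3$-element chain), their $ \fhi $-preimages witness two $R^{ \mathbb X }$-classes that are distinct, hence $|Q|\geq 2$, and density then forces $Q$ to be infinite. Alternatively one can invoke the (commented-out) fact that $ \mathbb X $ is perfect, but the L1 argument suffices.

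\emph{Conclusion.} Removing the first and last points of $Q$ leaves a countable-order-dense, dense linear order without endpoints which is moreover Dedekind-complete and separable; by the classical Cantor characterisation of $\mathbb R$ (a dense complete linear order without endpoints having a countable order-dense subset is order-isomorphic to $\mathbb R$), this middle part has order type $\lambda$, and therefore $\leq'$ has order type $1+\lambda+1$. The main obstacle is purely bookkeeping: one must be slightly careful that "separable in the order topology" for a \emph{dense} order genuinely gives a countable order-dense set (the endpoints and any "gaps" need to be handled), but completeness of $\leq'$ guarantees there are no gaps, so this is routine. I expect no genuine difficulty beyond correctly citing Cantor's theorem.
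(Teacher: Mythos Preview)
Your proposal is correct and follows essentially the same route as the paper: both arguments assemble the already-established facts (density, completeness, endpoints) together with separability of $(Q,\leq')$, deduced from Lemma~\ref{ordertopology} and the fact that $Q$ is a separable metric space, and then invoke the classical characterisation of the order type $1+\lambda+1$ (the paper cites \cite[theorem 2.30]{Rosens1982}). Your explicit nontriviality check is a point the paper leaves implicit, but otherwise the arguments match.
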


\begin{proof}
We already noted that $\leq'$ is bounded and complete.
We remark that it is also a separable order: indeed, it is a dense order, so every open interval is non-empty and, by lemma \ref{ordertopology}, open in the Polish space $Q$, thus every interval contains a point of a fixed countable dense subset of $Q$.
Now apply \cite[theorem 2.30]{Rosens1982}.
\end{proof}

Since the topology of $Q$ is induced by an order of type $1+\lambda +1$, it follows that $Q$ is an arc, concluding the proof of theorem \ref{repnarcs}.

An immediate consequence is now the following.
Recall that a hypercube is a space homeomorphic to $[0,1]^n$, for some $n$.

\begin{corollary}
Every hypercube is finitely representable.
\end{corollary}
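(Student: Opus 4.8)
The plan is to obtain this immediately from Theorem \ref{repnarcs} together with the product closure result, Theorem \ref{products}. By definition a hypercube is a space that is homeomorphic to $[0,1]^n$ for some $n\in\N$, and since finite representability depends only on the homeomorphism type of the quotient (this is built into the definition of $ \mathcal L_R$-representability and hence of finite representability), it suffices to show that $[0,1]^n$ itself is finitely representable for every $n$.

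First I would record the base case. The space $[0,1]$ is an arc, so Theorem \ref{repnarcs} gives that it is finitely representable. (If one also wishes to include the degenerate case $n=0$: a single point is finitely representable, e.g. as the quotient of the projective Fra\"iss\'e limit of the family whose only member is the one-point $ \mathcal L_R$-structure with $R$ interpreted as the total relation, which is its own limit.)

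Then I would proceed by induction on $n$. Assume $[0,1]^n$ is finitely representable. Since $[0,1]$ is finitely representable by Theorem \ref{repnarcs}, Theorem \ref{products} in its two-factor formulation — which, as noted in the footnote there, suffices for finitely many factors — yields that the product $[0,1]^n\times[0,1]$ is finitely representable. As this space is homeomorphic to $[0,1]^{n+1}$, the induction step is complete, and therefore every $[0,1]^n$, and hence every hypercube, is finitely representable.

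I do not expect any genuine obstacle here: all of the substantive work is already contained in Theorems \ref{repnarcs} and \ref{products}, and what remains is only the elementary bookkeeping of the induction together with the trivial observation that homeomorphic spaces are simultaneously finitely representable or not.
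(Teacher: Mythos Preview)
Your argument is correct and is essentially the paper's own proof: the corollary follows immediately from Theorem \ref{repnarcs} (arcs are finitely representable) together with Theorem \ref{products} (closure under finite products), and your induction on $n$ just spells out what the paper leaves implicit.
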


\begin{proof}
By theorems \ref{repnarcs} and \ref{products}.
\end{proof}

For the next consequence recall that, in continuum theory, a graph is defined as a finite union of arcs any two of them meeting at most in one or both of their endpoints (see for example \cite{Nadler1992}).

\begin{corollary}
Every graph is finitely representable.
\end{corollary}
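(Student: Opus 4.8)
The plan is to realise a graph as a finite identification of endpoints inside a disjoint sum of arcs, and then to invoke theorem \ref{identification}. Fix a graph $\Gamma$, presented as a finite union of arcs $\alpha_1,\ldots,\alpha_k$ meeting only in one or both of their endpoints. First I would take $k$ copies $\mathbb X_1,\ldots,\mathbb X_k$ of the projective Fra\"iss\'e limit $\mathbb X$ of the family $\mathcal X$ constructed for theorem \ref{repnarcs}, so that each $\quot{\mathbb X_i}{R^{\mathbb X_i}}$ is an arc $Q_i$ which we fix a homeomorphism to $\alpha_i$, and form $\mathbb F=\mathbb X_1\oplus\cdots\oplus\mathbb X_k$; by theorem \ref{repnarcs} and by iterating theorem \ref{closurefinitesums}, this $\mathbb F$ is a projective Fra\"iss\'e limit of a projective Fra\"iss\'e family in a finite language, $R^{\mathbb F}$ is an equivalence relation, and $\quot{\mathbb F}{R^{\mathbb F}}$ is homeomorphic to $\bigsqcup_{i=1}^{k}Q_i$. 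Since $\Gamma$ is exactly $\bigsqcup_{i=1}^{k}Q_i$ with the appropriate endpoints identified, the remaining task is to carry out these identifications at the level of $\mathbb F$.

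The key step, and the one I expect to be the main (if mild) obstacle, is to verify that the hypotheses of theorem \ref{identification} are met, which boils down to checking that the endpoints of the arcs $Q_i$ lift to \emph{almost stable} points of $\mathbb F$. For this I would work inside a single $\mathbb X$: by lemma \ref{ordinconminemass} the relation $\leq^{\mathbb X}$ is a total order with a least and a last element, and every isomorphism of $\mathbb X$ preserves $\leq^{\mathbb X}$, hence fixes both of them; so the least and the last element of $\leq^{\mathbb X}$ are stable, a fortiori almost stable, and, since the set of almost stable points is $R^{\mathbb X}$-invariant, the (at most two-element) extreme $R^{\mathbb X}$-classes consist entirely of almost stable points. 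By the remark recorded after the proof of theorem \ref{closurefinitesums}, these points remain almost stable in $\mathbb F$. Writing $G$ for the set of almost stable points of $\mathbb F$, we thus know that $G$ contains the $R^{\mathbb F}$-class of every arc-endpoint.

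Finally I would encode $\Gamma$ by a closed equivalence relation $\equiv$ on $\mathbb F$: for each vertex $v$ of $\Gamma$ let $E_v$ be the union of the $R^{\mathbb F}$-classes of those arc-endpoints that are identified to $v$, and let $\equiv$ be the equivalence relation whose classes are the finitely many $E_v$ together with all $R^{\mathbb F}$-classes disjoint from each $E_v$. Then $R^{\mathbb F}\subseteq{\equiv}$; since $\equiv$ differs from the closed relation $R^{\mathbb F}$ only on the finite set $\bigcup_v (E_v\times E_v)$, it is a closed equivalence relation; and since each $E_v\subseteq G$ we get ${\equiv}\setminus G^2 = R^{\mathbb F}\setminus G^2$. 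Theorem \ref{identification} then applies and shows that $\quot{\mathbb F}{\equiv}$ is finitely representable, and this quotient is visibly $\bigsqcup_{i=1}^{k}Q_i$ with endpoints glued according to $\Gamma$, hence homeomorphic to $\Gamma$. The only remaining points are bookkeeping: circles and loops (two arcs sharing both endpoints, or a single arc whose two endpoints are identified to one vertex) are handled identically, since \emph{both} extreme classes of every $\mathbb X_i$ lie in $G$, so no genuine difficulty is expected there.
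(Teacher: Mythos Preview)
Your proposal is correct and follows essentially the same approach as the paper: observe that the extrema of $\leq^{\mathbb X}$ are fixed by every isomorphism and hence almost stable, pass to the disjoint sum of arcs via theorem \ref{closurefinitesums} (using the remark after it to retain almost stability), and then apply theorem \ref{identification} to glue the endpoints. Your write-up simply makes explicit the details the paper leaves implicit, in particular the use of $R^{\mathbb F}$-invariance of the set of almost stable points to ensure that the full classes $E_v$ lie in $G$.
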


\begin{proof}
Notice that in the proof of theorem \ref{repnarcs} each endpoint of arc $Q$ is the image under the quotient map of an almost stable point, since the extrema of a total order --- in this case $ \leq^{ \mathbb X }$ --- are preserved under isomorphism.
So we can use theorem \ref{closurefinitesums} to obtain a disjoint union of arcs; the remark following that theorem allows us to apply theorem \ref{identification} to glue endpoints and thus obtain any possible graph.
\end{proof}

\section{Questions} \label{qs}

In the previous sections we exhibited some simple classes of finitely representable spaces, enlarging the examples given in \cite{Camerl2010}.
This suggests the following general question.

\begin{question}
What spaces are finitely representable?
\end{question}

In our examples, due to the application of the constructions of section \ref{sum-prod}, the languages and the structures associated to the spaces were in some sense always related to the obvious structural characteristics of the spaces, starting from an order representing the arc.
The following rather vague question comes to mind.

\begin{question}
Given a finitely representable space, what are the minimal, or most natural, language and structures representing it?
Can some specific features of the space be derived directly from the language?
What are the obstructions that forbid a space to be represented with a given language?
\end{question}

\end{document}